\newtheorem{theorem}{Theorem}[section]
\newtheorem{corollary}{Corollary}[section]
\newtheorem{definition}{Definition}[section]
\newtheorem{lemma}{Lemma}[section]
\newtheorem{proposition}{Proposition}[section]
\newtheorem{remark}{Remark}
\newtheorem*{problem statement}{Problem Statement}
\newtheorem{assumption}{Assumption}
\newcommand{\real}{\mathbb{R}}
\newcommand{\s}{\boldsymbol{s}}
\newcommand{\lie}{\mathcal{L}}
\newcommand{\Sset}{\mathcal{S}}
\newcommand{\Zset}{\mathcal{Z}}
\newcommand{\normaldi}{\mathrm{N}}
\newcommand{\ball}{\mathbb{B}}
\newcommand{\proj}{\mathrm{proj}}
\DeclareMathOperator*{\argmin}{arg\,min}
\begin{document}
\title{Continuous Approximations of Projected Dynamical Systems via Control Barrier Functions}
\author{Giannis~Delimpaltadakis, Jorge~Cortés, and W.P.M.H.~Heemels
\thanks{Giannis Delimpaltadakis and W.P.M.H. Heemels are with the Control Systems Technology section, Mechanical Engineering, Eindhoven University of Technology, the Netherlands. Jorge Cortés is with the Department of Mechanical and Aerospace
Engineering, University of California San Diego, USA. Emails: \texttt{i.delimpaltadakis@tue.nl, cortes@ucsd.edu, w.p.m.h.heemels@tue.nl}. \newline \indent This research is partially funded by the European Research Council (ERC) under the Advanced ERC grant PROACTHIS, no. 101055384.}}

\maketitle
\begin{abstract}
   \emph{Projected Dynamical Systems} (PDSs) form a class of discontinuous constrained dynamical systems, and have been used widely to solve optimization problems and variational inequalities. Recently, they have also gained significant attention for control purposes, such as high-performance integrators, saturated control and feedback optimization. In this work, we establish that locally Lipschitz continuous dynamics, involving \emph{Control Barrier Functions} (CBFs), namely \emph{CBF-based dynamics}, approximate PDSs. Specifically, we prove that trajectories of CBF-based dynamics \emph{uniformly converge} to trajectories of PDSs, as a CBF-parameter approaches infinity. Towards this, we also prove that CBF-based dynamics are perturbations of PDSs, with quantitative bounds on the perturbation. Our results pave the way to implement discontinuous PDS-based controllers in a continuous fashion, employing CBFs. We demonstrate this on numerical examples on feedback optimization and synchronverter control. Moreover, our results can be employed to numerically simulate PDSs, overcoming disadvantages of existing discretization schemes, such as computing projections to possibly non-convex sets. Finally, this bridge between CBFs and PDSs may yield other potential benefits, including novel insights on stability.
\end{abstract}

\section{Introduction}
\emph{Projected Dynamical Systems} (PDSs) form a class of discontinuous dynamical systems, with trajectories constrained in some set $\Sset$ (see \cite{nagurney1995projected}). In particular, while the state of the system lies in the interior of $\Sset$, it evolves according to some nominal vector-field, but when it reaches the boundary of $\Sset$, the vector field is modified in a discontinuous manner, such that it keeps the trajectory inside $\Sset$. PDSs have been extensively employed for analyzing constrained optimization problems and variational inequalities \cite{nagurney1995projected,brogliato2006equivalence,heemels2000complementarity,hauswirth2021projected}, with applications in price markets, traffic networks, power grids, etc. Recently, they have also attracted considerable interest for control purposes, such as high-performance hybrid integrators called HIGS \cite{deenen2021projection,heemels2023existence}, passivity-based control \cite{chu2023projection}, saturated control \cite{lorenzetti2022pi,fu2023novel} and feedback optimization\footnote{Driving the system's state to the solution of an optimization problem.} \cite{hauswirth2020anti,hauswirth2021optimization,GB-JC-JIP-EDA:22-tcns}. In all these control settings, a dynamic controller with PDS-like dynamics is interconnected to a dynamical system.

\emph{Control Barrier Functions} (CBFs) refer to a set of methods to control systems such that their trajectories remain in a safe set $\Sset$ \cite{ames2019control_review}. In contrast to PDSs, here the nominal dynamics is modified proactively, even before reaching the boundary of $\Sset$. This way, in contrast to PDSs, CBF-based control results in a locally Lipschitz continuous closed-loop vector field. The ``safe" dynamics is specified as the solution to a Quadratic Program (QP), with its constraint (the \emph{CBF constraint}) preventing the system to leave $\Sset$. The CBF constraint involves a tunable parameter, which, loosely, determines how far from the boundary of $\Sset$ the nominal vector-field starts being modified. CBFs have had numerous successful applications in safety-critical scenarios, such as robotics and automotive \cite{ames2019control_review,ames2016control,panagou2015distributed,reis2020control,wences2022compatibility}.

In this work, we reveal an intimate relationship between PDSs and CBFs: we show that PDSs can be approximated by (locally Lipschitz) continuous dynamics, employing CBFs. Specifically, we present results formally establishing the fact that system interconnections containing CBF-based continuous dynamics (hereby termed \emph{CBF-based dynamics}) approximate interconnections with discontinuous PDSs (hereby termed \emph{PDS-based dynamics}). Our results pave the way for implementing the PDS-based feedback controllers mentioned above in a continuous fashion, employing CBFs, with all the advantages that may come with continuity, such as increased robustness and easier sampled-data implementations. 

Moreover, our work has implications for purposes other than PDS-based control as well. First, one may employ locally Lipschitz CBF-based dynamics to numerically simulate discontinuous PDSs, e.g. for constrained optimization. While simulating Lipschitz CBF-based dynamics is computationally easy, standard PDS discretization schemes (see e.g., \cite{nagurney1995projected,edmond2006bv}) involve computing a projection to $\Sset$, which is computationally expensive, especially when $\Sset$ is non-convex. This may be particularly useful for online optimization (e.g., model predictive control). Further, our results build bridges between PDS theory and CBF theory. E.g., in our preliminary work \cite{delimpaltadakis2023relationship}, we showed how, in simple scenarios, one may infer stability of safety-critical CBF-based control from the associated PDS, addressing the notorious undesired equilibrium problem of CBFs (cf. \cite{reis2020control}).

\subsection*{Contributions}
We prove that trajectories of CBF-based dynamics uniformly converge to trajectories of PDS-based dynamics, as the tunable parameter $\alpha$ in the CBF constraint approaches $\infty$. Thus, as $\alpha$ becomes larger, CBF-based dynamics approximate more accurately PDS-based dynamics. This supports the use of CBF-based dynamics to implement PDS-based dynamics, as, for large $\alpha$, their trajectories are arbitrarily close. Towards proving convergence of trajectories, we also show that CBF-based dynamics are perturbations of PDSs, in the sense that the CBF-based vector field belongs in a set of perturbations of the set-valued map of the differential inclusion that describes the PDS. We obtain quantitative bounds on the perturbation, that depend on and vanish with $\alpha$. Finally, we demonstrate our results on examples on feedback optimization and synchronverter control, implementing PDS-based controllers from the literature in a continuous fashion, via CBFs.

\subsection*{Related work}
Our preliminary work \cite{delimpaltadakis2023relationship} explored the relationship between PDSs and CBFs. However, \cite{delimpaltadakis2023relationship} did not study convergence of trajectories, which is the present article's main theme.\footnote{As such, Cors. \ref{cor:interconnected_cbf_solutions_to_perturbed_DI}, \ref{cor:cbf_perturbation_of_limit_di} and \ref{cor:pds_same_solutions_with_limit_di}, Thms. \ref{thm:solution_convergence} and \ref{thm:stability_solution_convergence}, and Prop. \ref{prop:well-posed-limit-DI} appear here for the first time.} Further, \cite{delimpaltadakis2023relationship}, focusing on autonomous dynamics, proved that CBF-based dynamics are perturbed versions of PDSs \cite[Thm. III.1]{delimpaltadakis2023relationship}. Here, we extend \cite[Thm. III.1]{delimpaltadakis2023relationship} in two ways: a) we consider the more general non-autonomous case of interconnections of systems with dynamics that are either PDS-like or CBF-based, and b) we provide bounds on the perturbed set-valued map, establishing local boundedness thereof, which is necessary for proving convergence of trajectories.

The works~\cite{allibhoy2023control,allibhoy2023anytime,chen2023online} also employ CBF-based dynamics instead of PDS-based ones for constrained optimization problems, variational inequalities and feedback optimization. Mainly, they focus on the relationship between the \emph{equilibria} of CBF-based and PDS-based dynamics, and provide asymptotic stability and contractivity results on CBF-based dynamics. Further, in \cite{allibhoy2023control}, it is proven that the right-hand sides of the differential equations of PDSs and CBF-based dynamics coincide at $\alpha=\infty$. However, neither convergence of trajectories nor the fact that CBF-based dynamics are perturbations of PDS-based ones, with quantitative bounds on the perturbation, depending on $\alpha$, is established. Overall, our work can be viewed as \emph{complementary} to \cite{allibhoy2023control,allibhoy2023anytime,chen2023online}.

Finally, \cite{hauswirth2020anti} proposed a type of continuous approximations of PDSs, based on antiwindup control, called Anti-Windup Approximations (AWA). Some steps on establishing convergence of trajectories in our work are inspired by \cite{hauswirth2020anti} (e.g., proving that CBF-based dynamics are perturbations of PDS-based ones). Nonetheless, as CBF-based dynamics are different than AWA, proving the corresponding results is also significantly different. Further, trajectories of AWA, contrary to CBF-based dynamics, stay in an inflation of $\Sset$. If one needs to force AWA to stay in $\Sset$, modifications have to be made, which may result in undesired behaviors, such as discarding useful equilibria (e.g., extrema of feedback optimization) or shrinking regions of attraction. In addition, AWA involve calculating projections to $\Sset$, which can be computationally expensive. On the other hand, when $\Sset$ represents limitations of the physical system (e.g., input saturation), and said projections are physically enforced by the system, implementation of AWA does not even require knowledge of $\Sset$, in contrast to CBF-based dynamics. Apart from the above, both our work and \cite{hauswirth2020anti} establish interesting, previously unknown, connections between PDSs and other popular classes of dynamics, and might even suggest connections between antiwindup control and CBFs.

\section{Preliminaries}
\subsection{Notation}
Given a closed set $\Sset\subseteq\real^n$, denote by $\partial\Sset$ and $\mathrm{Int}(\Sset)$ its boundary and interior, respectively. Given $x\in\real^n$, its Euclidean distance to $\Sset$ is $d(x,\Sset):= \min_{y\in\Sset}\|x-y\|$, where $\|\cdot\|$ denotes the Euclidean norm. Its projection to $\Sset$ is $\proj_{\Sset}(x) :=\argmin_{y\in\Sset}\|x-y\|$. Given a function $f:X\to X'$, $\mathrm{dom}f$ denotes its domain.

Denote the set of positive-definite symmetric matrices in $\real^{n\times n}$ by $\mathbb{S}_+^n$. Given $P\in\mathbb{S}_+^n$, denote its minimum and maximum eigenvalues by $\underline{\lambda}(P)$ and $\overline{\lambda}(P)$, respectively. Given $P\in\mathbb{S}_+^n$ and $x\in\real^n$, denote $\|x\|_P=\sqrt{x^\top P x}$. Given a function $f:\real^m\times\real^n\to\real^{n}$ and a differentiable function $h:\real^n\to\real$, denote $\lie_fh(z,x):=\nabla h^\top(x)\cdot f(z,x)$, where $(z,x)\in\real^m\times\real^n$. A continuous function $\gamma:[0,\infty)\to\real$ is said to belong to $\mathcal{K}_\infty$, if $\gamma(0)=0$, $\gamma$ is strictly increasing, and $\lim_{a\to\infty}\gamma(a)=+\infty$. Finally, denote the closed unit ball of appropriate dimensions by $\ball$.

\subsection{Variational Analysis}
Here, we recall some basic concepts from variational analysis. For more detail, the reader is referred to \cite{rockafellar2009variational}.
\begin{definition}[Tangent Cone {\cite[Def. 6.1]{rockafellar2009variational}}]
    Given a closed set $\Sset\subseteq\real^n$, the \emph{tangent cone} to $\Sset$ at $x\in\Sset$, denoted by $T_\Sset(x)$, is the set of all vectors $w \in\real^n$ for which there exist sequences $\{x_i\}_{i\in\mathbb{N}}\in\Sset$ and $\{t_i\}_{i\in\mathbb{N}}$, $t_i>0$, with $x_i\to x$ and $t_i\to 0$, such that $w=\lim_{i\to\infty}\frac{x_i-x}{t_i}$.
\end{definition}
Intuitively, $T_{\Sset}(x)$ is the set of all vectors to the direction of which if we move infinitesimally from $x$, we still remain in $\Sset$.
\begin{definition}[Clarke regularity {\cite[Cor. 6.29]{rockafellar2009variational}}]
    Given a closed set $\Sset\subseteq\real^n$, $\Sset$ is \emph{Clarke regular} if and only if the set-valued map $x\mapsto T_\Sset(x)$ is inner semicontinuous\footnote{For the definition of inner semicontinuity, see \cite[Def. 5.4]{rockafellar2009variational}.}.
\end{definition}
As, in this paper, we work with sets that are at least Clarke regular, we employ the following definition of \emph{normal cone} and need not distinguish between different notions thereof:
\begin{definition}[Normal Cone {\cite[Prop. 6.5]{rockafellar2009variational}}]
    Given a closed set $\Sset\subseteq\real^n$, define the \emph{normal cone} of $\Sset$ at $x\in\Sset$ as $N_{\Sset}(x):=\{\eta\in\real^n\mid\eta^\top v\leq0, \text{ }\forall v\in T_\Sset(x)\}$.
\end{definition}
\begin{definition}[Prox-regularity {\cite[Def. 2.1]{adly2016preservation}}]
    A Clarke regular set $\Sset$ is \emph{prox-regular}, if there exists $\gamma>0$ such that, for any $x\in\Sset$: $\eta^\top(y-x)\leq\gamma\|\eta\|\|y-x\|^2$, for all $\eta\in N_{\Sset}(x)$ and $y\in\Sset$.
\end{definition}
Sets of the form 
\begin{equation}\label{eq:sset}
    \Sset = \{x\in\real^n\mid h(x)\geq 0\}
\end{equation}
where $h:\real^n\to\real$ is continuously differentiable, $x\mapsto\nabla h(x)$ is locally Lipschitz, and $\nabla h(x)\neq 0$ for all $x\in\partial\Sset$, are prox-regular and the expressions for their tangent and normal cones are obtained as follows (see \cite[Example 6.8]{hauswirth2021projected}), for any $x\in\Sset$:
\begin{equation}\label{eq:cones_proxregular}
\begin{aligned}
    T_{\Sset}(x) &= \left\{\begin{aligned}
        &\real^n, &\qquad x\in\mathrm{Int}(\Sset)\\
        &\{v\in\real^n\mid \nabla^\top h(x)\cdot v \geq 0\}, & \qquad x\in\partial\Sset
        \end{aligned}\right.\\
    N_{\Sset}(x)&=\left\{\begin{aligned}
        &\{0\}, &x\in\mathrm{Int}(\Sset)\\
        &\{\eta\in\real^n\mid \eta=\lambda\nabla h(x),\text{ }\lambda\leq0\}, &x\in\partial\Sset
        \end{aligned}\right.
\end{aligned}
\end{equation} 
\subsection{Differential Inclusions}
Given $\Sset\subseteq\real^n$, consider the constrained \emph{differential inclusion} (DI)
\begin{equation}\label{eq:DI}
    \dot{\xi} \in F(\xi), \quad \xi \in \Sset
\end{equation}
where $F:\real^n\rightrightarrows\real^n$ and we have omitted the dependence of $\xi$ on $t$, for conciseness. When it is clear from the context that the DI's constraint set is $\real^n$, we might omit it. Given an initial condition $x_0\in\Sset$ and a $T\in[0,\infty)$, a function $\phi:[0,T]:\to\real^n$ is a (Carathéodory) solution to DI \eqref{eq:DI}, if it is absolutely continuous, $\phi(0)=x_0$, $\phi(t)\in\Sset$ and $\dot{\phi}(t)\in F(\phi(t))$ for almost all $t\in[0,T]$. We adopt the same definition for solutions of differential equations, as well.
\begin{definition}[$\sigma$-perturbation {\cite[Def. 6.27 simplified]{hybrid_book}}]\label{def:sigma-perturbation}
    The \emph{$\sigma$-perturbation} of DI \eqref{eq:DI} is defined as
    \begin{equation*}
        \dot{\xi} \in F_\sigma(\xi), \quad \xi \in \Sset_\sigma
    \end{equation*}
    where $F_\sigma(x) = \overline{\mathrm{con}}F\Big((x+\sigma\ball)\cap\Sset\Big)+\sigma\ball$, $\overline{\mathrm{con}}$ denotes convex closure and $\Sset_\sigma = \Sset +\sigma\ball$.
\end{definition}
\section{Background on Projected Dynamical Systems and Control Barrier Functions}\label{sec:background_cbfs_pdss}
Both PDSs and CBF-based control methods start from an unconstrained nominal system 
\begin{equation}\label{eq:sys_nominal}
    \dot{\xi} = f(\xi)
\end{equation}
where $f:\real^n\to\real^n$, and end up with one that is constrained in some set $\Sset\subseteq\real^n$. As commonly done in the literature \cite{ames2019control_review,ames2016control,panagou2015distributed}, we consider sets $\Sset$ of the form \eqref{eq:sset}. For most of the results stated in this article, the following set of assumptions on $\Sset$ and $h$ is employed:
\begin{assumption}\label{assum:sset_and_h} $\Sset$ and $h:\real^n\to\real$ satisfy the following:
\begin{enumerate}
    \item \label{assum:compactness} $\Sset$ is nonempty and compact.
    \item \label{assum:h_C11} $h$ is continuously differentiable, $x\mapsto \nabla h(x)$ is locally Lipschitz, and its Lipschitz constant on $\Sset$ is $L_{\nabla h}$.
    \item \label{assum:regularity} For all $x\in\real^n$ such that $h(x)=0$, we have  $\nabla h(x)\neq0$.
    \item \label{assum:Kinf} There exists $\gamma\in\mathcal{K}_\infty$, such that, for all $x\in\Sset$: $d(x,\partial\Sset)\leq\gamma(h(x))$.
\end{enumerate}
\end{assumption}
Items \ref{assum:h_C11}, \ref{assum:regularity} and item \ref{assum:compactness} without the compactness assumption, are standard in the literature of CBFs (see e.g., \cite{ames2019control_review} or \cite{wences2022compatibility}). Compactness of $\Sset$ is needed for several bounds in the proof of Prop. \ref{prop:perturbation}. Nonetheless, in many related applications, such as PDS-based control, this is not a restrictive assumption. In fact, as $\Sset$ can be taken arbitrarily large, our results are semiglobal. Further, item \ref{assum:Kinf} holds a-priori if $h$ is a real-analytic function, by the \emph{Łojasiewicz inequality} (see, e.g.,\cite{tiep2012lojasiewicz}). Also, as aforementioned, items \ref{assum:h_C11} and \ref{assum:regularity} imply that $\Sset$ is prox-regular and its tangent and normal cones are expressed as in \eqref{eq:cones_proxregular}. Finally, the following standard assumption on $f$ is considered:
\begin{assumption}\label{assum:f_lipschitz}
$f$ is locally Lipschitz.
\end{assumption}

\subsection{Projected Dynamical Systems}
Given a matrix $P\in\mathbb{S}_+^{n}$, for any two vectors $x,v\in\real^n$, we define the projection operator $\Pi_{\Sset}^P:\real^n\times\real^n\rightrightarrows\real^n$ by
\begin{equation*}
    \Pi_{\Sset}^P(x,v) = \argmin_{\mu\in T_{\Sset}(x)} \|\mu-v\|^2_P
\end{equation*}
Now, consider the following DI:
\begin{equation}\label{eq:sys_pds}
\dot{\xi} \in \Pi^P_{\Sset}\Big(\xi,f(\xi)\Big) = \argmin_{\mu\in T_{\Sset}(\xi)} \|\mu-f(\xi)\|^2_P
\end{equation}
Systems of the form \eqref{eq:sys_pds} are called \emph{Projected Dynamical Systems} (PDSs) \cite{nagurney1995projected}. When $\xi\in\mathrm{Int}(\Sset)$, we have $T_{\Sset}(\xi)=\real^n$, and thus $\Pi^P_{\Sset}\Big(\xi,f(\xi)\Big) = f(\xi)$, i.e., the PDS \eqref{eq:sys_pds} evolves according to the unconstrained dynamics \eqref{eq:sys_nominal}. However, when $\xi\in\partial\Sset$, a vector $\mu\in T_{\Sset}(\xi)$ is chosen (minimizing $\|\mu-f(\xi)\|_P^2$), so that the trajectory remains in $\Sset$. Finally, when $\Sset$ is prox-regular, the right-hand side of \eqref{eq:sys_pds} is a singleton and \eqref{eq:sys_pds} becomes a discontinuous ODE.

Given a set $\Sset$, a matrix $P\in\mathbb{S}^n_+$, two vectors $x,v\in\real^n$ and a scalar $d\in[0,\infty]$ define the set-valued map $\normaldi^P_\Sset:\real^n\times\real^n\times[0,\infty]\rightrightarrows \real^n$ by
\begin{equation*}
    \normaldi^P_\Sset(x,v,d) := v-\Big(P^{-1}N_\Sset(x)\cap d\ball\Big)  
\end{equation*}
Given a function $d:\real^n\to[0,\infty]$, under Assumptions \ref{assum:sset_and_h} and \ref{assum:f_lipschitz}, and further assumptions on $d$, solutions of the PDS \eqref{eq:sys_pds} are equivalent to the solutions of the following DI (see Thm. \ref{thm:hauswirth_solutions}, below)
\begin{equation}\label{eq:pds_di}
    \dot{\xi}\in \normaldi^P_\Sset \Big(\xi,f(\xi), d(\xi)\Big),\quad \xi\in\Sset
\end{equation}
The interpretation of \eqref{eq:pds_di} is: when $\xi(t)$ is about to leave $\Sset$ (i.e., $\xi(t)\in\partial\Sset$), an element $\eta$ from the truncated normal cone $P^{-1}N_\Sset(\xi)\cap d(\xi)\ball$ is chosen, such that $f(\xi)-\eta$ points inwards $\Sset$, thereby keeping $\xi(t)\in\Sset$. The following is a collection of results on solutions of PDSs from \cite{hauswirth2021projected,hauswirth2020anti}:
\begin{theorem}[\hspace{-.1mm}\cite{hauswirth2021projected,hauswirth2020anti}]\label{thm:hauswirth_solutions}
    The following hold:
    \begin{enumerate}
        \item Let $\Sset$ be Clarke regular and $f$ be continuous. Then, \eqref{eq:sys_pds} admits a solution for every initial condition in $\Sset$.
        \item Let $\Sset$ be prox-regular and $f$ be locally Lipschitz. Then, \eqref{eq:sys_pds} admits a unique solution for every initial condition in $\Sset$.
        \item Let $\Sset$ be Clarke regular and $f$ be continuous. Consider $d:\real^n\to[0,\infty]$, such that $d(x)\geq \s\sqrt{\tfrac{\overline{\lambda}(P)}{\underline{\lambda}(P)}}\|f(x)\|$ for all $x\in\Sset$. Then, for every initial condition in $\Sset$, \eqref{eq:sys_pds} and \eqref{eq:pds_di} admit the same solutions.
    \end{enumerate}
\end{theorem}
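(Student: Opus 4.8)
\emph{Proof plan.}
Parts~(1) and~(2) are classical facts about projected dynamical systems and Moreau sweeping processes, so my plan is to follow the arguments in the references cited in the statement and only outline them here, treating part~(3) in slightly more detail since it has a short self-contained proof. For part~(1), I would regard \eqref{eq:sys_pds} as a viability problem: one seeks a solution of $\dot\xi\in G(\xi)$ that stays in $\Sset$, where $G$ is the set-valued regularization of the projected field $x\mapsto\Pi^P_\Sset(x,f(x))$. This field may be discontinuous on $\partial\Sset$, but it is locally bounded because $0\in T_\Sset(x)$ gives $\|\Pi^P_\Sset(x,v)\|_P\le\|v\|_P$, and its regularization is outer semicontinuous with nonempty, convex, compact values, since $x\mapsto N_\Sset(x)$ is outer semicontinuous while $x\mapsto T_\Sset(x)$ is inner semicontinuous (this being precisely Clarke regularity). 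Existence then follows from a standard Euler-polygon and Arzelà--Ascoli construction, and invariance of $\Sset$ from a Nagumo-type tangency argument exploiting $\Pi^P_\Sset(x,f(x))\in T_\Sset(x)$ and closedness of $\Sset$. Part~(2) then reduces to adding uniqueness to the existence already supplied by~(1).

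\emph{Uniqueness (part~2).}
I would take two solutions $\xi_1,\xi_2$ with the same initial condition, on a common interval on which both remain in a compact set $K\supseteq\Sset$ where $f$ is Lipschitz with constant $L_f$ and bounded by $M$. Writing $\dot\xi_i=f(\xi_i)-P^{-1}\eta_i$ with $\eta_i\in N_\Sset(\xi_i)$ (the Moreau decomposition used in part~(3)), the estimate $\|f(\xi_i)-\dot\xi_i\|\le\sqrt{\overline\lambda(P)/\underline\lambda(P)}\,\|f(\xi_i)\|$, again from $0\in T_\Sset(\xi_i)$, makes $\|\eta_i\|\le\overline\lambda(P)\|f(\xi_i)-\dot\xi_i\|$ uniformly bounded by a constant $C_\eta$. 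Setting $V(t):=\tfrac{1}{2}\|\xi_1(t)-\xi_2(t)\|_P^2$, one has for a.a.\ $t$ that $\dot V=(\xi_1-\xi_2)^\top P(f(\xi_1)-f(\xi_2))+\eta_1^\top(\xi_2-\xi_1)+\eta_2^\top(\xi_1-\xi_2)$; the first term is at most $\overline\lambda(P)L_f\|\xi_1-\xi_2\|^2$, and each of the last two is at most $\gamma C_\eta\|\xi_1-\xi_2\|^2$ by prox-regularity (applied with $\eta_i\in N_\Sset(\xi_i)$ and $\xi_{3-i}\in\Sset$, $\gamma$ being the prox-regularity constant). Hence $\dot V\le cV$ for some $c>0$, and Grönwall with $V(0)=0$ gives $\xi_1\equiv\xi_2$.

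\emph{Equivalence (part~3).}
For ``solutions of \eqref{eq:sys_pds} solve \eqref{eq:pds_di}'', I fix $x\in\Sset$ and put $v:=f(x)$, $\mu:=\Pi^P_\Sset(x,v)$. Since $T_\Sset(x)$ is a closed convex cone (Clarke regularity), the Moreau decomposition in the $P$-inner product gives $v-\mu\in P^{-1}N_\Sset(x)$, and the inequality $\|v-\mu\|_P\le\|v\|_P$, together with $\underline\lambda(P)\|\cdot\|^2\le\|\cdot\|_P^2\le\overline\lambda(P)\|\cdot\|^2$, yields $\|v-\mu\|\le\sqrt{\overline\lambda(P)/\underline\lambda(P)}\,\|v\|\le d(x)$ by the standing lower bound on $d$; thus $v-\mu\in P^{-1}N_\Sset(x)\cap d(x)\ball$, i.e.\ $\mu\in\normaldi^P_\Sset(x,f(x),d(x))$. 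For the converse, let $\xi$ solve \eqref{eq:pds_di}; since $\xi$ is continuous, $\Sset$ closed and $\xi(t)\in\Sset$ for a.a.\ $t$, in fact $\xi(t)\in\Sset$ for all $t$, hence $\dot\xi(t)\in T_\Sset(\xi(t))$ for a.a.\ $t$, while also $\dot\xi(t)\in f(\xi(t))-P^{-1}N_\Sset(\xi(t))$. On $\{t:h(\xi(t))>0\}$ one has $N_\Sset(\xi(t))=\{0\}$, so $\dot\xi=f(\xi)=\Pi^P_\Sset(\xi,f(\xi))$; on $E:=\{t:h(\xi(t))=0\}$ the absolutely continuous map $h\circ\xi$ vanishes, whence $\nabla h(\xi)^\top\dot\xi=\tfrac{d}{dt}(h\circ\xi)=0$ for a.a.\ $t\in E$, and combining this with $\dot\xi\in f(\xi)-P^{-1}N_\Sset(\xi)$ and the form of $N_\Sset$ on $\partial\Sset$ in \eqref{eq:cones_proxregular} pins $\dot\xi$ down uniquely; a short computation identifies this value with the $P$-projection of $f(\xi)$ onto the half-space $T_\Sset(\xi)$, i.e.\ with $\Pi^P_\Sset(\xi,f(\xi))$. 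Hence $\xi$ solves \eqref{eq:sys_pds}. For a general Clarke-regular $\Sset$, not necessarily of the form \eqref{eq:sset}, the same conclusion can be obtained as in the cited references; the only extra ingredient is the complementarity relation $\eta(t)^\top\dot\xi(t)=0$ for a.a.\ $t$.

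\emph{Expected main obstacle.}
The Grönwall estimate of~(2) and the forward inclusion of~(3) are routine. The delicate steps are: in~(1), proving invariance of $\Sset$ for a merely Clarke-regular set under a possibly discontinuous projected field --- this is exactly where inner semicontinuity of the tangent cone is indispensable; and in~(3), the converse inclusion, because the right-hand side of \eqref{eq:pds_di} is strictly larger than the singleton $\{\Pi^P_\Sset(x,f(x))\}$, so one must invoke the constraint $\xi\in\Sset$, via differentiating $h\circ\xi$ along the trajectory, to discard the spurious selections.
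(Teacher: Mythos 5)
This theorem is stated in the paper as a quoted result from \cite{hauswirth2021projected,hauswirth2020anti} and carries no proof in the paper itself, so there is no in-paper argument to compare against. Your sketch reconstructs the standard arguments of those references --- regularized Euler/viability existence using Clarke regularity (inner semicontinuity of $x\mapsto T_\Sset(x)$) for part 1, the hypermonotonicity/Gr\"onwall estimate under prox-regularity for part 2, and the Moreau decomposition in the $P$-inner product together with the a.e.\ identity $\tfrac{d}{dt}(h\circ\xi)=0$ on the contact set for part 3 --- and is correct as an outline, the only genuinely deferred step being the one you yourself flag: identifying the limit of the Euler/Krasovskii scheme with a Carath\'eodory solution of \eqref{eq:sys_pds} rather than merely of its convexified relaxation, which is precisely where the part-3-type equivalence and Clarke regularity do the real work.
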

Notice that prox-regularity implies Clarke regularity and that Assumption \ref{assum:sset_and_h} implies prox-regularity of $\Sset$. Thus, in our context, \eqref{eq:sys_pds} and \eqref{eq:pds_di} share the same unique solution, for every initial condition.

Finally, of particular interest to us are cases where PDSs are interconnected with other systems:
\begin{equation*}
    \begin{aligned}
        \dot{\zeta} = g(\zeta,\xi), \quad
        \dot{\xi} \in \Pi^P_{\Sset}\Big(\xi, f(\zeta,\xi)\Big)
    \end{aligned}
\end{equation*}
where $f:\real^m\times\real^n\to\real^n$ $g:\real^m\times\real^n\to\real^m$. The above setup can have different interpretations. For example, $\zeta(t)$ may be the state of some system controlled by a dynamic controller $\xi(t)$, with PDS dynamics. Such interconnections have been proposed e.g., for saturated control \cite{lorenzetti2022pi,fu2023novel} and feedback optimization \cite{hauswirth2021projected}.\footnote{A generalization of \eqref{eq:interconnection_pds}, termed \emph{extended PDSs}, has also been employed in \cite{deenen2021projection,heemels2023existence} for high-performance integral control and \cite{chu2023projection} for passivity-based control. Extension of our results to ePDSs is considered future work.} Moreover, one can disregard the $\zeta$-part (i.e., $g(\zeta,\xi)=0$) in \eqref{eq:interconnection_pds}, and obtain a standard PDS. Thus, our work also applies to cases such as constrained optimization \cite{nagurney1995projected,brogliato2006equivalence,heemels2000complementarity,hauswirth2021projected}. The above interconnection can be written as:
\begin{equation}\label{eq:interconnection_pds}
    \begin{aligned}
        \begin{bmatrix}
            \dot{\zeta}\\ \dot{\xi}
        \end{bmatrix} \in \Pi^{\tilde{P}}_{\real^m\times\Sset}\bigg((\zeta,\xi), \begin{bmatrix}
            g(\zeta,\xi)\\ f(\zeta,\xi)
        \end{bmatrix}\bigg)
    \end{aligned}
\end{equation}
where $\tilde{P} = \begin{bmatrix}
    I &0\\ 0 &P
\end{bmatrix}$. The normal-cone DI associated to \eqref{eq:interconnection_pds} is:
\begin{equation}\label{eq:interconnection_DI}
\begin{aligned}
    \begin{bmatrix}
            \dot{\zeta}\\ \dot{\xi}
    \end{bmatrix} \in \begin{bmatrix}
            g(\zeta,\xi)\\ \normaldi^P_\Sset\Big(\xi,f(\zeta,\xi),d(\zeta,\xi)\Big)
        \end{bmatrix}, \quad(\zeta,\xi)\in\real^m\times\Sset&   
\end{aligned}
\end{equation}
where $d:\real^n\times\real^m\to[0,\infty]$ is such that $d(z,x)\geq \sqrt{\tfrac{\overline{\lambda}(P)}{\underline{\lambda}(P)}}\|f(z,x)\|$ for all $(z,x)\in\real^m\times\Sset$, and again by Thm. \ref{thm:hauswirth_solutions}, under Assumption \ref{assum:sset_and_h} and local Lipschitz assumptions on $f,g$, it shares the same solutions with the PDS.

\subsection{Control-Barrier-Function-based dynamics}
Given $f:\real^n\to\real^n$, a parameter $\alpha>0$ and $P\in\mathbb{S}_+^{n}$, consider
\begin{equation}\label{eq:sys_cbf}
    \dot{\xi} = f^P_{\mathrm{cbf},\alpha}(\xi) := \left\{\begin{aligned}\argmin_\mu \quad&\|\mu-f(\xi)\|^2_P\\
    \mathrm{s.t.:} \quad &\lie_\mu h(\xi) + \alpha h(\xi) \geq 0\end{aligned}\right. 
\end{equation}
We refer to \eqref{eq:sys_cbf} as the \emph{CBF-based dynamics} as $h$ is referred to as the \emph{control barrier function} (CBF) in the CBF literature \cite{ames2019control_review}. Under Assumption \ref{assum:sset_and_h}, the QP above has a unique solution, for any $\xi$, which can be written in closed form as follows (see \cite[Thm. 1]{wences2022compatibility}):
\begin{equation*}
    f^P_{\mathrm{cbf},\alpha}(\xi) = f(\xi) - \min\Big(0,\lie_f h(\xi) + \alpha h(\xi)\Big)\frac{P^{-1}\nabla h(\xi)}{\|\nabla h(\xi)\|^2_{P^{-1}}}
\end{equation*}
We call $f^P_{\mathrm{cbf},\alpha}$ the \emph{CBF-based vector field}. In contrast to PDSs \eqref{eq:sys_pds}, the nominal dynamics is modified even when $\xi\in\mathrm{Int}\Sset$; $f$ is modified whenever $\lie_f h(\xi) + \alpha h(\xi)\leq 0$, i.e., whenever $h$ decreases along the nominal system's trajectories \eqref{eq:sys_nominal} faster than a state-dependent threshold $-\alpha h(\xi)$. The larger $\alpha$ is, the less invasive the modification is, as it allows for even more negative decreases $\lie_f h(\xi)$. Typically, this allows trajectories to approach closer to the boundary, before the nominal vector-field is modified. 

Vector fields like \eqref{eq:sys_cbf} arise in control systems of the form $\dot{x} = f(x)+u(x)$, where the controller $u(x)$ is designed via CBF-based methods. From standard CBF theory \cite{ames2019control_review}, trajectories of \eqref{eq:sys_cbf} starting in $\Sset$ stay in $\Sset$. Under Assumptions \ref{assum:sset_and_h} and \ref{assum:f_lipschitz}, $f^P_{\mathrm{cbf},\alpha}$ is locally Lipschitz, implying existence and uniqueness of solutions of \eqref{eq:sys_cbf}.

Finally, as with PDSs, we also consider the interconnection
\begin{equation}\label{eq:interconnection_cbf}
    \begin{aligned}
        \dot{\zeta} &= g(\zeta,\xi)\\
        \dot{\xi} &= f^P_{\mathrm{cbf},\alpha}(\zeta,\xi) := \left\{\begin{aligned}\argmin_\mu \quad&\|\mu-f(\zeta,\xi)\|^2_P\\
    \mathrm{s.t.:} \quad &\lie_\mu h(\xi) + \alpha h(\xi) \geq 0\end{aligned}\right.\\
    &= f(\zeta,\xi) - \min\Big(0,\lie_f h(\zeta,\xi) + \alpha h(\xi)\Big)\frac{P^{-1}\nabla h(\xi)}{\|\nabla h(\xi)\|^2_{P^{-1}}}
    \end{aligned}
\end{equation}
where $f:\real^m\times\real^n\to\real^n$ and $g:\real^m\times\real^n\to\real^m$. The article's main objective is to show that the feedback interconnection with a CBF-based controller \eqref{eq:interconnection_cbf} is a continuous approximation of the feedback interconnection with a PDS-based controller \eqref{eq:interconnection_pds}, in the sense of uniform convergence of trajectories. Briefly, as $\alpha\to\infty$, trajectories of \eqref{eq:interconnection_cbf} become identical to trajectories of \eqref{eq:interconnection_pds}. Towards this, we also derive another highly relevant result: that \eqref{eq:interconnection_cbf} is a perturbation of \eqref{eq:interconnection_pds}, with quantitative bounds on the perturbation.

\begin{remark}
{\rm
    Note that all our results relating (trajectories of) \eqref{eq:interconnection_cbf} with (trajectories of) \eqref{eq:interconnection_pds}, also hold for \eqref{eq:sys_cbf} and \eqref{eq:sys_pds}, as \eqref{eq:sys_cbf} and \eqref{eq:sys_pds} are special cases of \eqref{eq:interconnection_cbf} and \eqref{eq:interconnection_pds}, respectively.
    } \hfill $\blacksquare$
\end{remark}

\section{The relationship between PDSs and CBF-based dynamics}
In this section, we delve into the relationship between PDS-based dynamics \eqref{eq:interconnection_DI} and CBF-based dynamics \eqref{eq:interconnection_cbf}. First, we prove that CBF-based dynamics are perturbations of PDSs. Then, building on this, we prove that trajectories of CBF-based dynamics uniformly converge to trajectories of PDSs as $\alpha\to\infty$. The proofs to our results can be found in Section \ref{sec:proofs}, unless otherwise stated.

\subsection{CBF-based dynamics as perturbations of PDSs}
Here, we show how CBF-based dynamics can be viewed as perturbed versions of PDSs, in the sense of Def. \ref{def:sigma-perturbation}. First, as an extension to our preliminary work \cite[Thm. III.1]{delimpaltadakis2023relationship}, we show that $f^P_{\mathrm{cbf},\alpha}(z,x)$ belongs to a set of perturbations of the set-valued map $f(z,x)-P^{-1}N_{\Sset}(x)$.
\begin{proposition}\label{prop:perturbation}
    Consider a vector field $f:\real^m\times\real^n\to\real^n$, a compact set $\Zset\subset\real^m$ and a set $\Sset\subset\real^n$ defined by \eqref{eq:sset}. Let Assumptions \ref{assum:sset_and_h} and \ref{assum:f_lipschitz} hold. Denote the Lipschitz constant of $f$ on $\Zset\times\Sset$ by $L_f$. For $\epsilon$ such that $0<\epsilon<\min_{z\in\partial\Sset}\|\nabla h(z)\|$, define
\begin{equation*}\label{eq:a_1}
        \alpha_* = \frac{\max_{(z,x)\in\Zset\times\Sset}|\lie_f h(z,x)|}{\gamma^{-1}\Big(\frac{\min_{z\in\partial\Sset}\|\nabla h(z)\|- \epsilon}{L_{\nabla h}}\Big)}
\end{equation*}
Moreover, denote
\begin{align*}
    &M_1:= \min_{x\in\partial\Sset}\|\nabla h(x)\|, \quad
    M_2:=\max_{x\in\partial\Sset}\|\nabla h(x)\|, \quad
    M_3:= M_2 + L_{\nabla h}\gamma(\frac{1}{\alpha_*}\max_{(z,x)\in\Zset\times\Sset}|\lie_f h(z,x)|)\\
    &L_1 := \frac{\overline{\lambda}(P)}{\underline{\lambda}(P)\epsilon^2}L_{\nabla h}\biggl[1 + \frac{M_2\overline{\lambda}(P)\Big(M_2+M_3\Big)}{\underline{\lambda}(P)M_1^2}\bigg]
\end{align*}
Finally, define
\begin{align*}
    \sigma(\alpha) := \hspace{-3mm}\max\limits_{(z,x)\in\Zset\times\Sset}\hspace{-2mm}\max&\Big\{\gamma(\frac{1}{\alpha}|\lie_f h(z,x)|), \text{ }\Big(L_f+L_1|\lie_f h(z,x)|\Big)\gamma(\frac{1}{\alpha}|\lie_f h(z,x)|)\Big\}
\end{align*}
and
\begin{align*}
    \delta := \max\limits_{(z,x)\in\Zset\times\Sset}\Big(1+\frac{L_{\nabla h}\gamma(\frac{1}{\alpha_*}|\lie_f h(z,x)|)}{M_1}\Big)\frac{\overline{\lambda}(P)}{\underline{\lambda}(P)}\|f(z,x)\|
\end{align*}
Then, for any $\alpha\geq \alpha_*$, it holds that, for all $(z,x)\in\Zset\times\Sset$:
    \begin{align*}
            f^P_{\mathrm{cbf},\alpha}(z,x)\in K_{\sigma(\alpha)}\bigg(\normaldi^P_\Sset\Big(x,f(z,x),\delta\Big)\bigg)
    \end{align*}
where $f^P_{\mathrm{cbf},\alpha}$ is defined in \eqref{eq:interconnection_cbf} and
\begin{align*}
    &K_{\sigma(\alpha)}\bigg(\normaldi^P_\Sset\Big(x,f(z,x),\delta\Big)\bigg):=
    \bigg\{\normaldi^P_\Sset\Big(y,f(z,y),\delta\Big)+\sigma(\alpha)\ball\text{ }\Big|\text{ } y\in(x+\sigma(\alpha)\ball)\cap\Sset\bigg\}
\end{align*}
\end{proposition}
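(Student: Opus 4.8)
\emph{Proof idea.} Fix $(z,x)\in\Zset\times\Sset$ and $\alpha\geq\alpha_*$. By the definition of $K_{\sigma(\alpha)}(\cdot)$, it suffices to exhibit a point $y\in(x+\sigma(\alpha)\ball)\cap\Sset$ and an element $w\in\normaldi^P_\Sset(y,f(z,y),\delta)$ with $\|f^P_{\mathrm{cbf},\alpha}(z,x)-w\|\leq\sigma(\alpha)$. The plan is to split on whether the CBF constraint is active at $(z,x)$, i.e.\ on the sign of $\lie_f h(z,x)+\alpha h(x)$, using the closed form of $f^P_{\mathrm{cbf},\alpha}$ in \eqref{eq:interconnection_cbf}. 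If $\lie_f h(z,x)+\alpha h(x)\geq0$, then $f^P_{\mathrm{cbf},\alpha}(z,x)=f(z,x)$; take $y=x$ and $w=f(z,x)$, which lies in $\normaldi^P_\Sset(x,f(z,x),\delta)$ because $0\in N_\Sset(x)$ and $\delta\geq0$, so the discrepancy is $0\leq\sigma(\alpha)$.

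Now suppose $\lie_f h(z,x)+\alpha h(x)<0$. Then $0\leq h(x)<\tfrac1\alpha|\lie_f h(z,x)|$, since $\alpha h(x)<-\lie_f h(z,x)\leq|\lie_f h(z,x)|$, so by item~\ref{assum:Kinf} of Assumption~\ref{assum:sset_and_h} and monotonicity of $\gamma$ we get $d(x,\partial\Sset)\leq\gamma(h(x))\leq\gamma(\tfrac1\alpha|\lie_f h(z,x)|)=:\rho$; I would let $y\in\partial\Sset$ attain this distance, so $\|x-y\|=\rho$. Two facts then drive the estimates. First, because $\alpha\geq\alpha_*$ and, by the definition of $\alpha_*$ together with $\gamma\in\mathcal K_\infty$, $\gamma\bigl(\tfrac1{\alpha_*}\max_{(z',x')\in\Zset\times\Sset}|\lie_f h(z',x')|\bigr)=\tfrac{M_1-\epsilon}{L_{\nabla h}}$, the Lipschitz bound on $\nabla h$ yields $\epsilon\leq M_1-L_{\nabla h}\rho\leq\|\nabla h(x)\|\leq M_2+L_{\nabla h}\rho\leq M_3$, so $\|\nabla h(x)\|\in[\epsilon,M_3]$. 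Second, $\rho\leq\sigma(\alpha)$ and $y\in\partial\Sset\subseteq\Sset$, so $y$ is admissible for the $K_{\sigma(\alpha)}$ set. Writing $f^P_{\mathrm{cbf},\alpha}(z,x)=f(z,x)+c\,P^{-1}\nabla h(x)$ with $c:=-\bigl(\lie_f h(z,x)+\alpha h(x)\bigr)/\|\nabla h(x)\|^2_{P^{-1}}\geq0$, and using $-(\lie_f h(z,x)+\alpha h(x))\leq|\lie_f h(z,x)|\leq\|\nabla h(x)\|\,\|f(z,x)\|$ together with $\underline\lambda(P^{-1})=1/\overline\lambda(P)$ and $\|P^{-1}\|=1/\underline\lambda(P)$, one obtains the two working bounds $c\leq\tfrac{\overline\lambda(P)}{\epsilon^2}|\lie_f h(z,x)|$ and $\|c\,P^{-1}\nabla h(x)\|\leq\tfrac{\overline\lambda(P)}{\underline\lambda(P)}\|f(z,x)\|\leq\delta$.

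I would then choose $w=f(z,y)+c'\,P^{-1}\nabla h(y)$ with $c':=\min\{c,\ \delta/\|P^{-1}\nabla h(y)\|\}$: since $c'\geq0$, $c'\|P^{-1}\nabla h(y)\|\leq\delta$, and $P^{-1}N_\Sset(y)=\{\lambda P^{-1}\nabla h(y):\lambda\leq0\}$ at the boundary point $y$, indeed $w\in\normaldi^P_\Sset(y,f(z,y),\delta)$. The discrepancy is then bounded by
\[
\|f(z,x)-f(z,y)\|+\bigl\|c\,P^{-1}(\nabla h(x)-\nabla h(y))\bigr\|+|c-c'|\,\|P^{-1}\nabla h(y)\|,
\]
where the first term is $\leq L_f\rho$, the second is $\leq\tfrac{c\,L_{\nabla h}}{\underline\lambda(P)}\rho$, and the third (a ``clipping error'') is nonzero only if $c\|P^{-1}\nabla h(y)\|>\delta$, in which case it equals $c\|P^{-1}\nabla h(y)\|-\delta$ and is controlled by combining $\|c\,P^{-1}\nabla h(x)\|\leq\delta$ with $\bigl|\,\|P^{-1}\nabla h(y)\|-\|P^{-1}\nabla h(x)\|\,\bigr|\leq\tfrac{L_{\nabla h}}{\underline\lambda(P)}\rho$ and the enclosures $\|\nabla h(x)\|\in[\epsilon,M_3]$, $\|\nabla h(y)\|\in[M_1,M_2]$. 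Substituting $c\leq\tfrac{\overline\lambda(P)}{\epsilon^2}|\lie_f h(z,x)|$ and $\rho=\gamma(\tfrac1\alpha|\lie_f h(z,x)|)$, the three terms combine into a bound of the form $\bigl(L_f+L_1|\lie_f h(z,x)|\bigr)\gamma(\tfrac1\alpha|\lie_f h(z,x)|)\leq\sigma(\alpha)$; the precise values of $\delta$ and $L_1$ (with the $M_1,M_2,M_3$ dependence entering through the clipping-error estimate) are exactly those needed to close this inequality, and taking the maximum over $(z,x)\in\Zset\times\Sset$ — finite by compactness — gives the claim.

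\emph{Expected main obstacle.} The delicate step is the clipping error: one must keep $w$ inside the $\delta$-ball attached to $\normaldi^P_\Sset(y,\cdot,\delta)$ while still showing the discrepancy vanishes at rate $\gamma(\tfrac1\alpha|\lie_f h(z,x)|)$ as $\alpha\to\infty$; this is what forces the somewhat intricate forms of $\delta$ and $L_1$, and it relies crucially on the boundary-proximity bound $d(x,\partial\Sset)\leq\gamma(h(x))$ from Assumption~\ref{assum:sset_and_h} and on $\alpha\geq\alpha_*$ guaranteeing $\|\nabla h(x)\|\geq\epsilon>0$. The rest — the active/inactive case split, membership of $y$ in $(x+\sigma(\alpha)\ball)\cap\Sset$, and the Lipschitz/eigenvalue estimates — is routine bookkeeping.
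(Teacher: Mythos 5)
Your proposal is correct and follows essentially the same route as the paper's proof: the same case split on the sign of $\lie_f h(z,x)+\alpha h(x)$, the same projection of $x$ onto $\partial\Sset$ with $\|x-y\|\leq\gamma(\tfrac1\alpha|\lie_f h(z,x)|)$ via Assumption~\ref{assum:sset_and_h} item~\ref{assum:Kinf}, and the same use of $\alpha\geq\alpha_*$ to get $\epsilon\leq\|\nabla h(x)\|\leq M_3$ (these are exactly Lemmas~\ref{lem:neighbourhood} and~\ref{lem:nonzero_grad}). The one place you diverge is the choice of witness in $P^{-1}N_\Sset(y)\cap\delta\ball$: the paper takes $\eta=(\lie_f h(z,x)+\alpha h(x))\nabla h(y)/\|\nabla h(y)\|^2_{P^{-1}}$, i.e.\ renormalizes by the gradient at $y$, so that $\|P^{-1}\eta\|\leq\delta$ holds outright (the factor $1+L_{\nabla h}\gamma(\cdot)/M_1$ in $\delta$ exists precisely to absorb $\|\nabla h(x)\|\leq\|\nabla h(y)\|+L_{\nabla h}\|x-y\|$) and the discrepancy is then controlled by the Lipschitz estimate on $x\mapsto P^{-1}\nabla h(x)/\|\nabla h(x)\|^2_{P^{-1}}$ (Lemma~\ref{lem:normalized_grad_lipschitz}, which is where the stated $L_1$ comes from), whereas you keep the $x$-normalized coefficient and clip it; your variant also closes, since your combined constant $2L_{\nabla h}\overline{\lambda}(P)/(\underline{\lambda}(P)\epsilon^2)$ is dominated by $L_1$ (because $M_2\overline{\lambda}(P)(M_2+M_3)/(\underline{\lambda}(P)M_1^2)\geq 2$), and the only blemish is the harmless claim $\|x-y\|=\rho$ where you only get $\|x-y\|\leq\rho$.
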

\begin{remark}\label{rem:differences_with_earlier_work}
{\rm
    Prop. \ref{prop:perturbation} extends \cite[Thm. III.1]{delimpaltadakis2023relationship} in two ways. First, towards addressing interconnections \eqref{eq:interconnection_DI} and \eqref{eq:interconnection_cbf}, it considers $f:\real^m\times\real^n\to\real^n$, in contrast to \cite[Thm. III.1]{delimpaltadakis2023relationship}, which only considers $f:\real^n\to\real^n$. Second, it gives bounds $\delta$ on the normal cone elements in the right-hand side. This is crucial for proving convergence of trajectories later on, as it enables results from \cite{hybrid_book} for \emph{well-posed} DIs, requiring local boundedness of the set-valued map.
    }\hfill $\blacksquare$
\end{remark}
Observe that, due to Assumption \ref{assum:sset_and_h} item \ref{assum:regularity}, $M_1>0$, and, hence, there is always an $\epsilon$ such that $0<\epsilon<\min_{x\in\partial\Sset}\|\nabla h(x)\|$. Moreover, both $\sigma(\alpha)$ and $\delta$ are well-defined, as they are maxima of continuous locally bounded functions over compact sets. The set $\Zset$ constrains the first argument of $f$, i.e., $z\in\Zset\subset\real^m$. We thus study solutions of PDS-based dynamics \eqref{eq:interconnection_pds} and CBF-based dynamics \eqref{eq:interconnection_cbf} that are constrained in $\Zset\times\Sset$, as it becomes clear below (see Remark \ref{rem:sset and zset}).

Prop. \ref{prop:perturbation} gives rise to the two following results:
\begin{corollary}\label{cor:interconnected_cbf_solutions_to_perturbed_DI}
    Given $g:\real^m\times\real^n\to\real^n$ and any $\alpha\geq\alpha_*$, under the assumptions of Prop. \ref{prop:perturbation}, consider the following DI:
    \begin{equation}\label{eq:perturbed_DI}
    \begin{aligned}
        \begin{bmatrix}
                \dot{\zeta}\\ \dot{\xi}
        \end{bmatrix} \in \begin{bmatrix}
                g(\zeta,\xi)\\ K_{\sigma(\alpha)}\bigg(\normaldi^P_\Sset\Big(\xi,f(\zeta,\xi),\delta\Big)\bigg)
            \end{bmatrix},\quad (\zeta,\xi)\in\real^m\times(\Sset+\sigma(\alpha)\ball)&    
    \end{aligned}
    \end{equation}
    Given an initial condition $(z_0,x_0)\in\Zset\times\Sset$ and any $T\in[0,\infty)$, consider $\phi:[0,T]\to\real^m\times\real^n$, such that $\phi(0)=(z_0,x_0)$. Assume that $\phi(t)\in\Zset\times\Sset$ for all $t\in[0,T]$. Then, if $\phi$ is a solution to the CBF-based dynamics \eqref{eq:interconnection_cbf}, it is a solution to DI \eqref{eq:perturbed_DI}.
\end{corollary}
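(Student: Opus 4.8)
The plan is to verify, clause by clause, that the given curve $\phi$ meets the definition of a (Carathéodory) solution of the DI \eqref{eq:perturbed_DI}, using Proposition \ref{prop:perturbation} pointwise along $\phi$. Write $\phi(t)=(\zeta(t),\xi(t))$. Since $\phi$ is a solution of the CBF-based dynamics \eqref{eq:interconnection_cbf} on $[0,T]$, it is absolutely continuous, $\phi(0)=(z_0,x_0)$, and for almost every $t\in[0,T]$ one has $\dot\zeta(t)=g(\zeta(t),\xi(t))$ and $\dot\xi(t)=f^P_{\mathrm{cbf},\alpha}(\zeta(t),\xi(t))$. The absolute-continuity and initial-condition requirements of \eqref{eq:perturbed_DI} are thus inherited verbatim, and the first block of the right-hand side of \eqref{eq:perturbed_DI} is matched exactly by $\dot\zeta$.

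Next I would check the state constraint of \eqref{eq:perturbed_DI}: by the standing hypothesis $\phi(t)\in\Zset\times\Sset$ for all $t\in[0,T]$, and since $\Sset\subseteq\Sset+\sigma(\alpha)\ball$, we get $\phi(t)\in\real^m\times(\Sset+\sigma(\alpha)\ball)$ for all $t\in[0,T]$, as required.

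The only nontrivial step is the inclusion for the $\xi$-component. For every $t\in[0,T]$ we have $(\zeta(t),\xi(t))\in\Zset\times\Sset$, and by assumption $\alpha\geq\alpha_*$, so Proposition \ref{prop:perturbation} applies at the point $(z,x)=(\zeta(t),\xi(t))$ and yields
\[
    f^P_{\mathrm{cbf},\alpha}(\zeta(t),\xi(t))\in K_{\sigma(\alpha)}\!\Big(\normaldi^P_\Sset\big(\xi(t),f(\zeta(t),\xi(t)),\delta\big)\Big).
\]
Combining this with $\dot\zeta(t)=g(\zeta(t),\xi(t))$ shows that $\dot\phi(t)$ lies in the right-hand side of \eqref{eq:perturbed_DI} for almost every $t\in[0,T]$. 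Together with the two preceding points, this establishes that $\phi$ is a (Carathéodory) solution of \eqref{eq:perturbed_DI}.

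I do not anticipate a genuine obstacle: the corollary is essentially a bookkeeping consequence of Proposition \ref{prop:perturbation}. The one point worth emphasizing is that Proposition \ref{prop:perturbation} must be invoked only at points of $\Zset\times\Sset$, which is precisely why the statement carries the standing assumption $\phi(t)\in\Zset\times\Sset$ for all $t$; without it, the CBF perturbation estimate need not be available where $\xi(t)$ has left $\Sset$ (or $\zeta(t)$ has left $\Zset$). Compactness of $\Zset$ and $\Sset$, already assumed in Proposition \ref{prop:perturbation}, ensures $\alpha_*$, $\sigma(\alpha)$ and $\delta$ are finite, so $K_{\sigma(\alpha)}(\cdot)$ is a well-defined nonempty set at each such point.
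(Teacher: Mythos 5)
Your proof is correct and matches the paper's approach exactly: the paper simply states that the corollary ``follows readily from Prop.~\ref{prop:perturbation}'', and your argument is precisely the pointwise application of that proposition along the trajectory, together with the routine bookkeeping (absolute continuity, initial condition, and the inclusion $\Sset\subseteq\Sset+\sigma(\alpha)\ball$ for the state constraint) that the authors left implicit.
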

\begin{proof}
    Follows readily from Prop. \ref{prop:perturbation}.
\end{proof}
\begin{corollary}\label{cor:cbf_perturbation_of_limit_di}
    Given $g:\real^m\times\real^n\to\real^n$, under the assumptions of Prop. \ref{prop:perturbation},
    for any $\alpha\geq\alpha_*$, DI \eqref{eq:perturbed_DI} is a $\sigma(\alpha)$-perturbation of the following DI:
    \begin{equation}\label{eq:limit_DI}
    \begin{aligned}
        \begin{bmatrix}
                \dot{\zeta}\\ \dot{\xi}
        \end{bmatrix} \in \begin{bmatrix}
                g(\zeta,\xi)\\ \normaldi^P_\Sset\Big(\xi,f(\zeta,\xi),\delta\Big)
            \end{bmatrix}, \quad (\zeta,\xi)\in\real^m\times\Sset    
    \end{aligned}
    \end{equation}
\end{corollary}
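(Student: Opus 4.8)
The plan is to verify directly, from Def.~\ref{def:sigma-perturbation}, that the data of DI~\eqref{eq:perturbed_DI} are contained in the data of the $\sigma(\alpha)$-perturbation of DI~\eqref{eq:limit_DI}. Write $G$ for the set-valued map on the right-hand side of \eqref{eq:limit_DI}, i.e. $G(\zeta,\xi) = \{g(\zeta,\xi)\}\times\normaldi^P_\Sset(\xi,f(\zeta,\xi),\delta)$, with constraint set $\real^m\times\Sset$. First I would dispose of the state constraint: inflating $\real^m$ along its own directions changes nothing, so $(\real^m\times\Sset)+\sigma(\alpha)\ball = \real^m\times(\Sset+\sigma(\alpha)\ball)$, which is exactly the constraint set of \eqref{eq:perturbed_DI}. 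It then remains to compare the right-hand sides at an arbitrary admissible $(\zeta,\xi)\in\real^m\times(\Sset+\sigma(\alpha)\ball)$.

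For the right-hand sides I would use three elementary facts. First, the inflation ball in $K_{\sigma(\alpha)}(\cdot)$ does not depend on the probing point $y$, so it can be pulled out of the union: $K_{\sigma(\alpha)}\big(\normaldi^P_\Sset(\xi,f(\zeta,\xi),\delta)\big) = \big(\bigcup_{y\in(\xi+\sigma(\alpha)\ball)\cap\Sset}\normaldi^P_\Sset(y,f(\zeta,y),\delta)\big)+\sigma(\alpha)\ball$. Second, for each such $y$ the point $(\zeta,y)$ lies in the probing set $\big((\zeta,\xi)+\sigma(\alpha)\ball\big)\cap(\real^m\times\Sset)$, because $\|(\zeta,y)-(\zeta,\xi)\| = \|y-\xi\|\le\sigma(\alpha)$ and $(\zeta,y)\in\real^m\times\Sset$; hence $G(\zeta,y) = \{g(\zeta,y)\}\times\normaldi^P_\Sset(y,f(\zeta,y),\delta)$ is among the sets whose union forms $G\big(((\zeta,\xi)+\sigma(\alpha)\ball)\cap(\real^m\times\Sset)\big)$. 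Third, $A\subseteq\overline{\mathrm{con}}A$, so the convex closure in Def.~\ref{def:sigma-perturbation} only enlarges the set and never obstructs an inclusion. Putting these together: a generic element of the right-hand side of \eqref{eq:perturbed_DI} at $(\zeta,\xi)$ has the form $(g(\zeta,\xi),\,w+b)$ with $w\in\normaldi^P_\Sset(y,f(\zeta,y),\delta)$, $y\in(\xi+\sigma(\alpha)\ball)\cap\Sset$ and $\|b\|\le\sigma(\alpha)$; it is obtained from the element $(g(\zeta,y),\,w)\in G(\zeta,y)\subseteq\overline{\mathrm{con}}\,G\big(((\zeta,\xi)+\sigma(\alpha)\ball)\cap(\real^m\times\Sset)\big)$ by adding the vector $(g(\zeta,\xi)-g(\zeta,y),\,b)$, so it lands in the right-hand side of the $\sigma(\alpha)$-perturbation of \eqref{eq:limit_DI} as soon as that vector is controlled by $\sigma(\alpha)\ball$. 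When this result is applied via Cor.~\ref{cor:interconnected_cbf_solutions_to_perturbed_DI} one may moreover restrict to trajectories that stay in $\Zset\times\Sset$, so $\xi\in\Sset$ and $\xi$ itself is an admissible probing point.

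The step that needs care --- indeed the only non-automatic one --- is that last estimate on $(g(\zeta,\xi)-g(\zeta,y),\,b)$, i.e. the bookkeeping around the $\zeta$-component. In \eqref{eq:perturbed_DI} the $\zeta$-dynamics is exactly $g(\zeta,\xi)$, at the \emph{un}-perturbed state, whereas the $\sigma(\alpha)$-perturbation collects $g$-values over the entire probing set before convexifying and inflating; since in \eqref{eq:perturbed_DI} the perturbed argument $y$ enters only $f$, not $g$, and $\|y-\xi\|\le\sigma(\alpha)$, closing this gap comes down to local-Lipschitz estimates for the data on the compact set $\Zset\times\Sset$, which is precisely why $\sigma(\alpha)$ is defined in Prop.~\ref{prop:perturbation} with the Lipschitz factors $L_f,\,L_1$ built in. At worst the conclusion holds with $\sigma(\alpha)$ replaced by a fixed multiple thereof, which is immaterial for the convergence statements (Thm.~\ref{thm:solution_convergence}) since $\sigma(\alpha)\to0$ as $\alpha\to\infty$. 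Everything else --- the constraint-set identity, pulling out the inflation ball, and the convex-closure inclusion --- is routine.
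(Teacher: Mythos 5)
Your proposal is correct and takes the same route as the paper, whose entire proof of this corollary is the single line ``Follows directly by combining Prop.~\ref{prop:perturbation} and Def.~\ref{def:sigma-perturbation}''; you have simply carried out the bookkeeping (constraint-set identity, pulling the inflation ball out of the union, embedding each probing point $(\zeta,y)$ into the probing set of the full-state perturbation, and $A\subseteq\overline{\mathrm{con}}A$) that the paper leaves implicit. The one caveat you flag --- that the correction vector $(g(\zeta,\xi)-g(\zeta,y),\,b)$ need not fit in $\sigma(\alpha)\ball$ --- is genuine, but it is not repaired by the factors $L_f,L_1$ in $\sigma(\alpha)$ (those control the $f$- and normalized-gradient increments in the $\xi$-component, not $g$, and the corollary itself places no regularity on $g$); rather, in every downstream use $g$ is at least continuous on the compact set where solutions live, so the needed enlargement of $\sigma(\alpha)$ is by a modulus of continuity of $g$ evaluated at $\sigma(\alpha)$ (a fixed multiple only when $g$ is Lipschitz, as in statement 2 of Thm.~\ref{thm:solution_convergence}), which still vanishes as $\alpha\to\infty$ and thus, as you say, does not affect Thms.~\ref{thm:solution_convergence} and \ref{thm:stability_solution_convergence}.
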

\begin{proof}
    Follows directly by combining Prop. \ref{prop:perturbation} and Def. \ref{def:sigma-perturbation}.
\end{proof}
Moreover, DI \eqref{eq:limit_DI}, under our assumptions, shares the same bounded solutions with PDS \eqref{eq:interconnection_pds}:
\begin{corollary}
\label{cor:pds_same_solutions_with_limit_di}
Given $g:\real^m\times\real^n\to\real^n$, under the assumptions of Prop. \ref{prop:perturbation}, for any $\alpha\geq\alpha_*$, consider PDS \eqref{eq:interconnection_pds} and DI \eqref{eq:limit_DI}. Given an initial condition $(z_0,x_0)\in\Zset\times\Sset$ and any $T\in[0,\infty)$, consider a function $\phi:[0,T]\to\real^m\times\real^n$, such that $\phi(0)=(z_0,x_0)$. Assume that $\phi(t)\in\Zset\times\Sset$ for all $t\in[0,T]$. The following hold:
    \begin{enumerate}
        \item Let $g$ be continuous. Then, $\phi$ is a solution to PDS \eqref{eq:interconnection_pds} if and only if it is a solution to DI \eqref{eq:limit_DI}.
        \item Further, let $g$ be locally Lipschitz. Then, if $\phi$ is a solution to PDS \eqref{eq:interconnection_pds}, and thus to DI \eqref{eq:limit_DI}, it is the only solution.
    \end{enumerate}
\end{corollary}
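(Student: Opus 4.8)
The plan is to reduce both claims to Thm.~\ref{thm:hauswirth_solutions}, applied to the \emph{joint} system in $\real^{m+n}$ with state $w=(\zeta,\xi)$, vector field $W(w):=\big(g(\zeta,\xi),f(\zeta,\xi)\big)$, weight $\tilde P$ as in \eqref{eq:interconnection_pds}, and constraint set $\mathcal W:=\real^m\times\Sset$. The first step is purely structural. I would observe that $\mathcal W$ is again a set of the form \eqref{eq:sset}, with defining function $(\zeta,\xi)\mapsto h(\xi)$, so by the discussion around \eqref{eq:cones_proxregular} it is prox-regular (hence Clarke regular), $T_{\mathcal W}(\zeta,\xi)=\real^m\times T_\Sset(\xi)$ and $N_{\mathcal W}(\zeta,\xi)=\{0\}\times N_\Sset(\xi)$. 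Since $\tilde P^{-1}=\mathrm{diag}(I,P^{-1})$ and $\|(0,\eta)\|=\|\eta\|$, a one-line computation then gives $\Pi^{\tilde P}_{\mathcal W}(w,W(w))=\big(g(\zeta,\xi),\,\Pi^P_\Sset(\xi,f(\zeta,\xi))\big)$ and
\[
\normaldi^{\tilde P}_{\mathcal W}\big(w,W(w),d\big)=\Big(g(\zeta,\xi),\ \normaldi^P_\Sset\big(\xi,f(\zeta,\xi),d\big)\Big).
\]
Thus \eqref{eq:interconnection_pds} is literally a PDS of the form \eqref{eq:sys_pds} over $\mathcal W$, its associated normal-cone DI \eqref{eq:pds_di} is literally \eqref{eq:interconnection_DI}, and \eqref{eq:limit_DI} is the instance with the constant truncation $d\equiv\delta$. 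I would also record, as in the discussion below \eqref{eq:interconnection_DI}, that because $N_{\mathcal W}$ acts only in the $\xi$-coordinates, the normal-cone element selected by the PDS lies in the $\xi$-block and has norm at most $\sqrt{\overline{\lambda}(P)/\underline{\lambda}(P)}\,\|f(\zeta,\xi)\|$, so that only $f$, not $(g,f)$, governs the admissible truncation level, and the corresponding specialization of item~3 of Thm.~\ref{thm:hauswirth_solutions} needs only continuity of $W$.

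For the first claim, the sole subtlety is that $\delta$ from Prop.~\ref{prop:perturbation} is a \emph{constant} that dominates $\sqrt{\overline{\lambda}(P)/\underline{\lambda}(P)}\,\|f(z,x)\|$ only on $\Zset\times\Sset$ (indeed, since the factor multiplying $\tfrac{\overline{\lambda}(P)}{\underline{\lambda}(P)}\|f(z,x)\|$ in the definition of $\delta$ is $\geq 1$, one has $\delta\geq\tfrac{\overline{\lambda}(P)}{\underline{\lambda}(P)}\|f(z,x)\|\geq\sqrt{\overline{\lambda}(P)/\underline{\lambda}(P)}\,\|f(z,x)\|$ for all $(z,x)\in\Zset\times\Sset$, as $\overline{\lambda}(P)/\underline{\lambda}(P)\geq1$), whereas item~3 of Thm.~\ref{thm:hauswirth_solutions} wants the bound on all of $\mathcal W$. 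I would bridge this by passing to $\tilde d(z,x):=\max\{\delta,\ \sqrt{\overline{\lambda}(P)/\underline{\lambda}(P)}\,\|f(z,x)\|\}$, which satisfies the bound globally on $\mathcal W$ and coincides with $\delta$ on $\Zset\times\Sset$. With $g$ continuous and $f$ locally Lipschitz, $W$ is continuous, so item~3 of Thm.~\ref{thm:hauswirth_solutions} (in the product-structure form above) gives that \eqref{eq:interconnection_pds} and \eqref{eq:interconnection_DI} with $d=\tilde d$ share their solution sets. Since $\phi$ is assumed to satisfy $\phi(t)\in\Zset\times\Sset$ for all $t$, one has $\tilde d(\phi(t))=\delta$ along $\phi$, hence $\phi$ solves \eqref{eq:interconnection_DI} with $d=\tilde d$ if and only if it solves \eqref{eq:limit_DI}; chaining the two equivalences proves the first claim.

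For the second claim, adding local Lipschitzness of $g$ makes $W$ locally Lipschitz (using Assumption~\ref{assum:f_lipschitz}), and $\mathcal W$ is prox-regular, so item~2 of Thm.~\ref{thm:hauswirth_solutions} yields a unique solution of \eqref{eq:interconnection_pds} from every initial condition in $\mathcal W$. The given $\phi$ is therefore that unique solution; by the first claim it also solves \eqref{eq:limit_DI}, and conversely any solution of \eqref{eq:limit_DI} that stays in $\Zset\times\Sset$ solves \eqref{eq:interconnection_pds} by the first claim, hence coincides with $\phi$.

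I expect the main obstacle to be exactly this reconciliation: the truncation $\delta$ handed down by Prop.~\ref{prop:perturbation} is a single constant that is a legitimate normal-cone bound only on the \emph{compact} set $\Zset\times\Sset$, so item~3 of Thm.~\ref{thm:hauswirth_solutions} cannot be quoted verbatim over the unbounded $\mathcal W$; making this rigorous requires both the localization from $\delta$ to $\tilde d$ and essential use of the confinement hypothesis $\phi(t)\in\Zset\times\Sset$. A secondary, routine point is verifying the block-wise splitting of $\Pi^{\tilde P}_{\mathcal W}$ and $\normaldi^{\tilde P}_{\mathcal W}$ into a trivial $\zeta$-channel and the genuine PDS in the $\xi$-channel (and that $\mathcal W$ inherits prox-regularity), together with the observation that the normal component's magnitude is controlled by $\|f\|$ alone — which is precisely what allows $\delta$ to be defined without reference to $g$.
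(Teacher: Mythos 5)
Your proposal is correct and follows essentially the same route as the paper: the paper's proof is exactly the observation that $\delta\geq\sqrt{\overline{\lambda}(P)/\underline{\lambda}(P)}\,\|f(z,x)\|$ on $\Zset\times\Sset$ followed by an appeal to Thm.~\ref{thm:hauswirth_solutions} (with the block-wise product structure already set up in the discussion around \eqref{eq:interconnection_pds}--\eqref{eq:interconnection_DI}). Your additional care with the localization from the constant $\delta$ to a globally admissible truncation $\tilde d$, using the confinement hypothesis $\phi(t)\in\Zset\times\Sset$, fills in a step the paper leaves implicit but does not change the argument.
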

\begin{proof}
    Observe that, for all $(z,x)\in\Zset\times\Sset$, we have $\delta\geq \sqrt{\tfrac{\overline{\lambda}(P)}{\underline{\lambda}(P)}}\|f(z,x)\|$. The result then follows by Theorem \ref{thm:hauswirth_solutions}.
\end{proof}
Let us explain the significance of Cors. \ref{cor:interconnected_cbf_solutions_to_perturbed_DI}, \ref{cor:cbf_perturbation_of_limit_di} and \ref{cor:pds_same_solutions_with_limit_di}. First, note that solutions of the CBF-based dynamics \eqref{eq:interconnection_cbf} are solutions of perturbed DI \eqref{eq:perturbed_DI} (from Cor. \ref{cor:interconnected_cbf_solutions_to_perturbed_DI}). Also, notice that $\sigma$ is continuous, strictly decreasing on $\alpha$, and satisfies $\sigma(\alpha)\geq0$ and $\lim_{\alpha\to\infty}\sigma(\alpha)=0$. Thus, from Cor. \ref{cor:cbf_perturbation_of_limit_di}, as $\alpha\to\infty$, the perturbed DI \eqref{eq:perturbed_DI}, and thus the CBF-based dynamics \eqref{eq:interconnection_cbf}, tends to the \emph{limiting} DI \eqref{eq:limit_DI}; and, the larger $\alpha$ is picked, the closer is perturbed DI \eqref{eq:perturbed_DI}, and thus the CBF-based dynamics \eqref{eq:interconnection_cbf}, to the limiting DI \eqref{eq:limit_DI}. Finally, from Cor. \ref{cor:pds_same_solutions_with_limit_di}, we know that the limiting DI \eqref{eq:limit_DI} shares the same solutions with the PDS \eqref{eq:interconnection_pds}. Thus, CBF-based dynamics \eqref{eq:interconnection_cbf} are perturbed versions of PDS-based ones \eqref{eq:interconnection_pds}, and $\sigma(\alpha)$ is a bound on that perturbation, that vanishes as $\alpha\to\infty$. In the coming section, we employ all the above, to prove that trajectories of CBF-based dynamics \eqref{eq:interconnection_cbf} uniformly converge to trajectories of PDS-based dynamics \eqref{eq:interconnection_pds}.
\begin{remark}
{\rm
    \cite[Prop. 4.4]{allibhoy2023control} proves that the right-hand sides of \eqref{eq:interconnection_pds} and \eqref{eq:interconnection_cbf} coincide at $\alpha\to\infty$. This is generally not enough to infer convergence of solutions. In contrast, we provide bounds, depending on $\alpha$, on the perturbation that \eqref{eq:interconnection_cbf} is to \eqref{eq:interconnection_pds}. As it becomes evident in Section \ref{sec:convergence}, this enables proving convergence of solutions.
    } \hfill $\blacksquare$
\end{remark}

\subsection{Trajectories of CBF-based dynamics converge to trajectories of PDSs}\label{sec:convergence}
Here, we employ Cors. \ref{cor:interconnected_cbf_solutions_to_perturbed_DI}, \ref{cor:cbf_perturbation_of_limit_di} and \ref{cor:pds_same_solutions_with_limit_di}, to show that trajectories of CBF-based dynamics \eqref{eq:interconnection_cbf} uniformly converge to bounded trajectories of PDSs \eqref{eq:interconnection_pds}, \eqref{eq:interconnection_DI}, as $\alpha\to\infty$. This establishes that CBF-based dynamics are indeed approximations of PDSs. Specifically, we first show that any sequence of bounded solutions to the CBF-based dynamics, corresponding to an increasing sequence $\{\alpha_i\}_{i\in\mathbb{N}}$ of parameters $\alpha_i$ with $\lim_{i\to\infty}\alpha_i=\infty$, has a subsequence which converges to a solution of the PDS \eqref{eq:interconnection_pds}. Moreover, the whole sequence converges to a PDS solution \eqref{eq:interconnection_pds}, when this solution is unique.
\begin{theorem}\label{thm:solution_convergence}
    Consider vector fields $f:\real^m\times\real^n\to\real^n$ and $g:\real^m\times\real^n\to\real^m$, a compact set $\Zset\subset\real^m$ and a set $\Sset\subset\real^n$ defined by \eqref{eq:sset}. Let Assumptions \ref{assum:sset_and_h} and \ref{assum:f_lipschitz} hold. Consider an increasing sequence $\{\alpha_i\}_{i\in\mathbb{N}}$, such that $\alpha_i\geq\alpha_*$ and $\lim_{i\to\infty}\alpha_i=\infty$, where $\alpha_*$ is defined in Prop. \ref{prop:perturbation}. Given an initial condition $(z_0,x_0)\in\Zset\times\Sset$ and any $T\in[0,\infty)$, let the functions $\phi_i:[0,T]\to\real^m\times\real^n$, with $\phi_i(0)=(z_0,x_0)$, be the solutions to CBF-based dynamics \eqref{eq:interconnection_cbf} with $\alpha=\alpha_i$. Assume that $\phi_i(t)\in\Zset\times\Sset$ for all $t\in[0,T]$ and for all $i$. The following hold:
    \begin{enumerate}
        \item Let $g$ be continuous and locally bounded. Then, there is a subsequence $\{\phi_{i_k}\}_{k\in\mathbb{N}}$ of $\{\phi_i\}_{i\in\mathbb{N}}$ that converges uniformly, on any compact subinterval of $[0,T)$, to a solution $\phi$ of PDS \eqref{eq:interconnection_pds}, with $\mathrm{dom}\phi=[0,T]$ and $\phi(0)=(z_0,x_0)$.
        \item Further, let $g$ be locally Lipschitz. Then, the sequence $\{\phi_i\}_{i\in\mathbb{N}}$ converges uniformly, on any compact subinterval of $[0,T)$, to the unique solution $\phi$ of PDS \eqref{eq:interconnection_pds}, with $\mathrm{dom}\phi=[0,T]$ and $\phi(0)=(z_0,x_0)$.
    \end{enumerate}
\end{theorem}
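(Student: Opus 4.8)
The plan is to reduce the statement to a closedness / sequential-compactness property of the \emph{limiting} DI \eqref{eq:limit_DI}, chaining together the results already in hand. By Cor.~\ref{cor:interconnected_cbf_solutions_to_perturbed_DI}, each $\phi_i$ --- a bounded solution on $[0,T]$ of the CBF-based dynamics \eqref{eq:interconnection_cbf} with $\alpha=\alpha_i\ge\alpha_*$ --- is a solution of the perturbed DI \eqref{eq:perturbed_DI} with $\sigma=\sigma(\alpha_i)$; and by Cor.~\ref{cor:cbf_perturbation_of_limit_di}, \eqref{eq:perturbed_DI} is exactly the $\sigma(\alpha_i)$-perturbation (Def.~\ref{def:sigma-perturbation}) of \eqref{eq:limit_DI}. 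Since $\sigma$ is continuous, nonnegative, strictly decreasing, and $\sigma(\alpha_i)\to0$, we face a sequence of solutions of vanishing perturbations of a fixed constrained DI. The proof then has four steps: (i) extract a uniformly convergent subsequence $\phi_{i_k}\to\phi$; (ii) show $\phi$ solves \eqref{eq:limit_DI}; (iii) invoke Cor.~\ref{cor:pds_same_solutions_with_limit_di}(1) to promote $\phi$ to a solution of PDS \eqref{eq:interconnection_pds}, yielding part~1; (iv) add local Lipschitzness of $g$ to upgrade to convergence of the full sequence, yielding part~2.

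For step (i) the key point is a bound on $\|\dot\phi_i\|$ that is \emph{uniform in $i$}. Crucially, $\delta$ in Prop.~\ref{prop:perturbation} depends only on $\alpha_*$, not on $\alpha$, and $\alpha_i\ge\alpha_*$ gives $\sigma(\alpha_i)\le\sigma(\alpha_*)$; hence, along any trajectory valued in $\Zset\times\Sset$, every element of $K_{\sigma(\alpha_i)}\big(\normaldi^P_\Sset(\xi,f(\zeta,\xi),\delta)\big)$ has norm at most $M_\xi:=\sup_{\Zset\times(\Sset+\sigma(\alpha_*)\ball)}\|f\|+\delta+\sigma(\alpha_*)$, which is finite since $f$ is (locally Lipschitz, hence) bounded on the compact set $\Zset\times(\Sset+\sigma(\alpha_*)\ball)$. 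With $M_\zeta:=\sup_{\Zset\times\Sset}\|g\|<\infty$ (finite because $g$ is locally bounded and $\Zset\times\Sset$ is compact), Cor.~\ref{cor:interconnected_cbf_solutions_to_perturbed_DI} yields $\|\dot\phi_i(t)\|\le M:=M_\zeta+M_\xi$ for a.e.\ $t\in[0,T]$ and all $i$. Thus $\{\phi_i\}$ is uniformly $M$-Lipschitz and, as $\phi_i(t)\in\Zset\times\Sset$, uniformly bounded; by Arzela--Ascoli a subsequence $\{\phi_{i_k}\}$ converges uniformly on $[0,T]$ to an $M$-Lipschitz (hence absolutely continuous) $\phi$ with $\phi(0)=(z_0,x_0)$, $\phi(t)\in\Zset\times\Sset$, and $\mathrm{dom}\,\phi=[0,T]$; in particular it converges uniformly on every compact subinterval of $[0,T)$.

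Step (ii) is the crux. I would invoke that the limiting DI \eqref{eq:limit_DI}, restricted to the closed set $\real^m\times\Sset$, is \emph{well-posed}: its right-hand side has nonempty, compact, convex values (the truncated cone $P^{-1}N_\Sset(x)\cap\delta\ball$ is convex and compact, and $g$ is single-valued), is locally bounded (thanks to the $\delta\ball$ truncation and continuity of $f,g$), and is outer semicontinuous (outer semicontinuity of $x\mapsto N_\Sset(x)$ following from prox-regularity of $\Sset$ under Assumption~\ref{assum:sset_and_h}) --- this is the content of Prop.~\ref{prop:well-posed-limit-DI}. For such a well-posed constrained DI, its solution set is closed under vanishing perturbations: a uniformly bounded sequence of solutions of the $\sigma(\alpha_{i_k})$-perturbations, with $\sigma(\alpha_{i_k})\to0$, that converges uniformly on compact intervals, has as its limit a solution of \eqref{eq:limit_DI}. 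This is precisely the sequential-compactness / closedness machinery for (hybrid) dynamical systems of \cite{hybrid_book}, applicable here because the perturbations of Def.~\ref{def:sigma-perturbation} coincide with those used there; alternatively one argues directly with the classical closure lemma for differential inclusions, passing to the limit in $\dot\phi_{i_k}(t)\in F_{\sigma(\alpha_{i_k})}(\phi_{i_k}(t))$ using weak-$L^1$ relative compactness of $\{\dot\phi_{i_k}\}$, uniform convergence $\phi_{i_k}\to\phi$, $\sigma(\alpha_{i_k})\to0$, and outer semicontinuity plus convexity of the limiting map. I expect this to be the main obstacle: one must carefully verify the regularity conditions on \eqref{eq:limit_DI} (where the $\delta\ball$ truncation and prox-regularity of $\Sset$ are essential) and justify the compactness result in the constrained, non-autonomous setting, ensuring the limit inherits both the full domain $[0,T]$ and the state constraint $\Zset\times\Sset$.

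It then remains to conclude. Since $\phi$ is a solution of \eqref{eq:limit_DI} valued in $\Zset\times\Sset$ with $g$ continuous, Cor.~\ref{cor:pds_same_solutions_with_limit_di}(1) gives that $\phi$ solves PDS \eqref{eq:interconnection_pds} with $\mathrm{dom}\,\phi=[0,T]$ and $\phi(0)=(z_0,x_0)$; this is part~1. For part~2, local Lipschitzness of $g$ makes such a PDS solution unique by Cor.~\ref{cor:pds_same_solutions_with_limit_di}(2); denote it $\phi$, which exists by part~1. Apply the standard subsequence principle: every subsequence of $\{\phi_i\}$ is again a sequence of bounded CBF-solutions meeting the hypotheses, hence by part~1 admits a further subsequence converging uniformly (on compact subintervals of $[0,T)$) to a PDS solution valued in $\Zset\times\Sset$, which by uniqueness equals $\phi$. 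As every subsequence has a further subsequence with the same limit $\phi$, the whole sequence $\{\phi_i\}$ converges uniformly to $\phi$, completing part~2.
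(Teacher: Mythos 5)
Your proof is correct and follows essentially the same route as the paper's: CBF solutions are recognized as solutions of vanishing $\sigma(\alpha_i)$-perturbations of the well-posed limiting DI \eqref{eq:limit_DI} (Cors.~\ref{cor:interconnected_cbf_solutions_to_perturbed_DI}, \ref{cor:cbf_perturbation_of_limit_di}, Prop.~\ref{prop:well-posed-limit-DI}), a compactness argument yields a convergent subsequence whose limit solves \eqref{eq:limit_DI} and hence, by Cor.~\ref{cor:pds_same_solutions_with_limit_di}, the PDS \eqref{eq:interconnection_pds}, and part~2 follows from uniqueness via the subsequence principle. The only difference is presentational: the paper outsources the compactness and closure steps to the graphical-convergence machinery of \cite{hybrid_book} (Thm.~5.7, Def.~6.29, Lemma~5.28), whereas you reconstruct them directly via an explicit uniform derivative bound, Arzel\`a--Ascoli, and the classical closure lemma for differential inclusions.
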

\begin{remark}
{\rm
    Given Props. \ref{prop:perturbation}, \ref{prop:well-posed-limit-DI} and Cors. \ref{cor:interconnected_cbf_solutions_to_perturbed_DI},  \ref{cor:cbf_perturbation_of_limit_di}, \ref{cor:pds_same_solutions_with_limit_di}, the proof of Thm. \ref{thm:solution_convergence} follows steps similar to \cite[Section 4]{hauswirth2020anti}. 
    }\hfill $\blacksquare$
\end{remark}

Assuming that solutions of CBF-based dynamics \eqref{eq:interconnection_cbf} stay in $\Zset\times\Sset$, Thm. \ref{thm:solution_convergence} establishes their uniform convergence to solutions of PDSs \eqref{eq:interconnection_pds}. While it is guaranteed that $\xi$ stays in $\Sset$, this is not always the case for $\zeta$. Nevertheless, if PDS \eqref{eq:interconnection_pds} obeys a stability property, then, as the result below shows, solutions to \eqref{eq:interconnection_cbf} stay in $\Zset\times\Sset$, for sufficiently large $\alpha$, and thus converge to solutions of PDS \eqref{eq:interconnection_pds}.
\begin{theorem}\label{thm:stability_solution_convergence}
    Consider vector fields $f:\real^m\times\real^n\to\real^n$ and $g:\real^m\times\real^n\to\real^m$, a compact set $\Zset\subset\real^m$ and a set $\Sset\subset\real^n$ defined by \eqref{eq:sset}. Let Assumptions \ref{assum:sset_and_h} and \ref{assum:f_lipschitz} hold. Further, let $g$ be continuous and locally bounded. Given $\gamma>0$, consider compact sets $\Zset_0,\Zset'$, such that $\Zset_0\subseteq \Zset'$ and $\Zset'+\gamma\ball \subseteq \Zset$. Given any initial condition $(z_0,x_0)\in \Zset_0\times\Sset$ and a $T>0$, assume any solution $\phi:[0,T]\to\real^m\times\real^n$ to PDS \eqref{eq:interconnection_pds}, with $\phi(0)=(z_0,x_0)$, satisfies $\phi(t)\in \Zset'\times\Sset$ for all $t\in[0,T]$. Then, there exists $\alpha'\geq\alpha_*$, such that for any $\alpha\geq \alpha'$, the solution $\psi:[0,T]\to\real^m\times\real^n$ to the CBF-based dynamics \eqref{eq:interconnection_cbf}, with $\psi(0)=(z_0,x_0)$, satisfies $\psi(t)\in\Zset\times\Sset$ for all $t\in[0,T]$. Thus, for any increasing sequence $\{\alpha_i\}_{i\in\mathbb{N}}$, such that $\alpha_i\geq\alpha'$ and $\lim_{i\to\infty}\alpha_i=\infty$, and respective sequence of solutions $\{\psi_i\}_{i\in\mathbb{N}}$ to \eqref{eq:interconnection_cbf}, statements 1 and 2 of Thm. \ref{thm:solution_convergence} hold.
\end{theorem}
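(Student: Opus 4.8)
The plan is to argue by contradiction, reducing the claim to an application of (the argument behind) Theorem~\ref{thm:solution_convergence} carried out on a time interval strictly contained in $[0,T)$. Suppose no such $\alpha'$ exists. Then there is an increasing sequence $\alpha_i\to\infty$ with $\alpha_i\geq\alpha_*$ and, for each $i$, a solution $\psi_i=(\zeta_i,\xi_i)$ of \eqref{eq:interconnection_cbf} with $\alpha=\alpha_i$ and $\psi_i(0)=(z_0,x_0)$ that leaves $\Zset\times\Sset$ somewhere on $[0,T]$. Since trajectories of CBF-based dynamics starting in $\Sset$ stay in $\Sset$, it is the $\zeta_i$-component that leaves $\Zset$. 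Fix the compact set $\Zset'':=\Zset'+\tfrac{\gamma}{2}\ball$, so that $d(\Zset',\partial\Zset'')\geq\tfrac{\gamma}{2}>0$ and $\Zset''+\tfrac{\gamma}{2}\ball\subseteq\Zset$, and let $B_g:=\max_{\Zset\times\Sset}\|g\|$, finite since $g$ is continuous and $\Zset\times\Sset$ is compact. Let $\tau_i\in(0,T]$ be the first time at which $\zeta_i$ leaves $\Zset''$. While $\zeta_i\in\Zset$ we have $\|\dot\zeta_i\|=\|g(\zeta_i,\xi_i)\|\leq B_g$, so $\zeta_i$ needs at least $\tfrac{\gamma}{2B_g}$ units of time to travel from $\partial\Zset''$ to $\partial\Zset$; since $\zeta_i$ does reach $\partial\Zset$ by time $T$, we get $\tau_i\leq T-\tfrac{\gamma}{2B_g}=:T'<T$, and $\psi_i$ is defined and confined to $\Zset\times\Sset$ on $[0,\tau_i+\tfrac{\gamma}{2B_g}]$. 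Passing to a subsequence, $\tau_i\to\tau^*\in[0,T']$, and for $i$ large $\psi_i$ is defined and stays in $\Zset\times\Sset$ on all of $[0,\tau^*]\subseteq[0,T')$.

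Next I would set up the compactness. On $\Zset\times\Sset$ the right-hand side of \eqref{eq:interconnection_cbf} is bounded uniformly in $i$: $\|g\|\leq B_g$, and the correction term $|\min(0,\lie_f h(\zeta,\xi)+\alpha_i h(\xi))|\,\|P^{-1}\nabla h(\xi)\|/\|\nabla h(\xi)\|^2_{P^{-1}}$ stays bounded independently of $\alpha_i$ for $\alpha_i$ large: on $\Sset$ one has $h\geq 0$, so $|\min(0,\lie_f h+\alpha_i h)|\leq\max_{\Zset\times\Sset}|\lie_f h|$, while on a neighborhood of $\partial\Sset$ (the only place where the correction can be nonzero once $\alpha_i$ is large) $\nabla h$ is bounded away from $0$ by Assumption~\ref{assum:sset_and_h}. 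Hence $\{\psi_i|_{[0,\tau^*]}\}$ is uniformly bounded and equi-Lipschitz. By Corollary~\ref{cor:interconnected_cbf_solutions_to_perturbed_DI}, each $\psi_i|_{[0,\tau^*]}$ solves the perturbed DI \eqref{eq:perturbed_DI} with parameter $\alpha_i$, which by Corollary~\ref{cor:cbf_perturbation_of_limit_di} is the $\sigma(\alpha_i)$-perturbation (in the sense of Definition~\ref{def:sigma-perturbation}) of the limiting DI \eqref{eq:limit_DI}, with $\sigma(\alpha_i)\to 0$. Since \eqref{eq:limit_DI} is well posed (Proposition~\ref{prop:well-posed-limit-DI}), the Arzelà--Ascoli theorem together with the sequential compactness / robustness of solution sets of well-posed DIs \cite{hybrid_book} yields a further subsequence of $\{\psi_i\}$ converging uniformly on $[0,\tau^*]$ to an absolutely continuous $\phi$ with $\phi(0)=(z_0,x_0)$, $\phi([0,\tau^*])\subseteq\Zset\times\Sset$, solving \eqref{eq:limit_DI}; this is exactly the argument underlying Theorem~\ref{thm:solution_convergence}, now run on the compact interval $[0,\tau^*]\subseteq[0,T)$. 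By Corollary~\ref{cor:pds_same_solutions_with_limit_di}, $\phi$ solves the PDS \eqref{eq:interconnection_pds}.

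To close the argument, I would extend $\phi$ to a maximal solution of \eqref{eq:interconnection_pds} (Theorem~\ref{thm:hauswirth_solutions} gives existence from any point of $\real^m\times\Sset$, and a standard extension argument applies); by hypothesis such a solution remains in the compact set $\Zset'\times\Sset$, hence is defined on all of $[0,T]$, and in particular $\zeta_\phi(\tau^*)\in\Zset'$. On the other hand, equi-Lipschitzness and $\tau_i\to\tau^*$ give $\|\psi_i(\tau_i)-\phi(\tau^*)\|\leq\|\psi_i(\tau_i)-\psi_i(\tau^*)\|+\|\psi_i(\tau^*)-\phi(\tau^*)\|\to 0$; since $\zeta_i(\tau_i)\in\partial\Zset''$ and $\partial\Zset''$ is closed, this forces $\zeta_\phi(\tau^*)\in\partial\Zset''$, contradicting $\zeta_\phi(\tau^*)\in\Zset'$ and $d(\Zset',\partial\Zset'')\geq\gamma/2>0$. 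Hence an $\alpha'\geq\alpha_*$ as claimed exists. Finally, for any $\alpha_i\geq\alpha'$ the solutions $\psi_i$ of \eqref{eq:interconnection_cbf} stay in $\Zset\times\Sset$ on $[0,T]$, so the hypotheses of Theorem~\ref{thm:solution_convergence} are met: statement~1 applies, and statement~2 applies as well provided $g$ is, in addition, locally Lipschitz.

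The main obstacle is the compactness/limit step: invoking the correct robustness theorem for well-posed differential inclusions from \cite{hybrid_book} and verifying all its hypotheses — outer semicontinuity, \emph{local boundedness} (precisely where the quantitative bound $\delta$ from Proposition~\ref{prop:perturbation} is indispensable), convexity of values, and the well-posedness granted by Proposition~\ref{prop:well-posed-limit-DI} — while matching the perturbation structure to Definition~\ref{def:sigma-perturbation}, so that one obtains a genuine uniformly convergent subsequence with domain $[0,\tau^*]$ rather than a merely graphical limit. A secondary, bookkeeping-type difficulty is the uniform-speed estimate that keeps $\tau_i$ strictly below $T$ and the trajectories inside $\Zset$, so that the entire limiting argument takes place on a compact subinterval of $[0,T)$ where the convergence of Theorem~\ref{thm:solution_convergence} is available, together with the extension of the limiting PDS solution to all of $[0,T]$ needed to invoke the stability hypothesis.
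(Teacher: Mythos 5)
Your argument is correct in substance, but it takes a genuinely different route from the paper's. The paper's proof is much shorter: it invokes the packaged robustness result \cite[Prop.~6.34]{hybrid_book} for the well-posed limiting DI \eqref{eq:limit_DI} to conclude directly that, for all $\alpha$ beyond some $\alpha'$, \emph{every} solution of the perturbed DI \eqref{eq:perturbed_DI} is $\gamma'$-close (with $\gamma'<\gamma$) to a PDS solution and hence confined to $(\Zset'+\gamma'\ball)\times\Sset$; it then runs a first-exit-time continuity argument on the CBF solution $\psi$ (if $\psi$ left $\Zset\times\Sset$ at time $t_e$, at some $t'<t_e$ it would be a solution of \eqref{eq:perturbed_DI} lying outside $(\Zset'+\gamma'\ball)\times\Sset$ while still inside $\Zset\times\Sset$, contradicting the closeness bound). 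You instead negate the conclusion, obtain a sequence of ``bad'' parameters $\alpha_i\to\infty$, manufacture---via the intermediate set $\Zset''$ and the travel-time estimate $\gamma/(2B_g)$---a compact subinterval $[0,\tau^*]$ on which the bad solutions remain in $\Zset\times\Sset$, re-run the sequential-compactness argument of Thm.~\ref{thm:solution_convergence} there, and contradict the stability hypothesis because the limit must lie simultaneously in $\Zset'$ and on $\partial\Zset''$. In effect you re-derive, by Arzel\`a--Ascoli plus well-posedness, the uniformity in $\alpha$ that \cite[Prop.~6.34]{hybrid_book} hands the paper for free; your version is self-contained relative to the proof of Thm.~\ref{thm:solution_convergence} but costs the extra bookkeeping with $\Zset''$, the subsequence extractions, and the equi-Lipschitz endpoint argument at $\tau^*$ (which you do correctly supply). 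Two small points to tidy: (i) the travel-time bound presumes $T>\gamma/(2B_g)$---otherwise no trajectory starting in $\Zset_0$ can reach $\partial\Zset$ by time $T$ and the claim is immediate, so this is only an omitted trivial case; (ii) your final step extends the limit $\phi$ from $[0,\tau^*]$ to $[0,T]$ in order to invoke the stability hypothesis, which as stated concerns solutions already defined on all of $[0,T]$; you should say a word about why the maximal extension does not escape before $T$ (the paper's own use of \cite[Prop.~6.34]{hybrid_book} leans on the same reading of the hypothesis, so this is a shared, minor interpretive point rather than a flaw specific to your proof).
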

\begin{remark}
{\rm
    The stability assumption of Thm. \ref{thm:stability_solution_convergence} is often met in practice, as PDS-based controllers \cite{lorenzetti2022pi,hauswirth2020anti,deenen2021projection,fu2023novel,chu2023projection,hauswirth2021optimization} normally guarantee closed-loop stability of \eqref{eq:interconnection_pds}. Thus, if the PDS-based controller is replaced by a CBF-based one, to yield the CBF-based dynamics \eqref{eq:interconnection_cbf}, Thm. \ref{thm:stability_solution_convergence} ensures that trajectories of \eqref{eq:interconnection_cbf} converge to trajectories of \eqref{eq:interconnection_pds}; that is, \eqref{eq:interconnection_cbf} behaves similarly to \eqref{eq:interconnection_pds}.
    }
    \hfill $\blacksquare$
\end{remark} 
\begin{remark}\label{rem:sset and zset}
{\rm
    We study solutions that are constrained in the compact set $\Zset\times\Sset$. As $\Zset\times\Sset$ can be arbitrarily large, our results are semiglobal. Adopting the state-controller interconnection interpretation of \eqref{eq:interconnection_pds} and \eqref{eq:interconnection_cbf}, the set $\Sset$ describes constraints on controller states, whereas $\Zset$ is where state trajectories lie. The larger $\Zset$ is, the larger $\alpha_*$ is (Prop. \ref{prop:perturbation}). Thus, for smaller $\Zset$, one may use smaller values of $\alpha$, to approximate PDS-based dynamics with CBF-based ones.
    }\hfill $\blacksquare$
\end{remark}
\begin{remark}\label{rem:non-uniform}
{\rm
    As our results are semiglobal, one might be able to show non-uniform convergence in non-compact domains, through limit arguments over sequences of compact sets converging to $\real^n$.}\hfill $\blacksquare$
\end{remark}
\begin{remark}
{\rm
    Combining Theorem \ref{thm:stability_solution_convergence} and \cite[Prop. 6.34]{hybrid_book}, one can infer stability of CBF-based dynamics: if the origin for PDS-based dynamics \eqref{eq:interconnection_pds} is asymptotically stable, then it is practically stable for CBF-based dynamics \eqref{eq:interconnection_cbf}. Stability and contractivity results for CBF-based dynamics have also been reported in \cite{allibhoy2023anytime}.
    }\hfill $\blacksquare$
\end{remark}

\section{Numerical Examples}
\subsection{Feedback optimization}
\begin{figure*}[t!]
		\begin{subfigure}[t]{0.32\linewidth}
    \centering
    \includegraphics[scale=0.32]{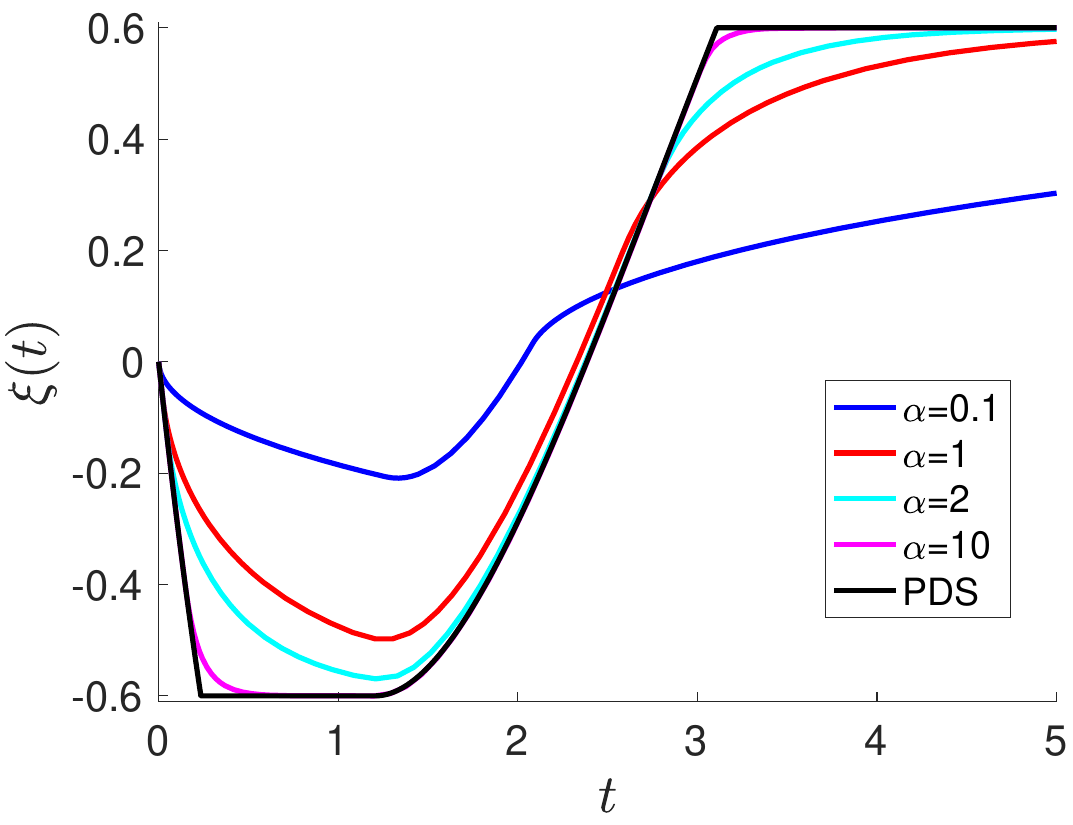}
    \caption{Evolution of the control input}
    \label{fig:feed_opti_input}
		\end{subfigure}~~~
		\begin{subfigure}[t]{0.32\linewidth}
    \centering
    \includegraphics[scale=0.32]{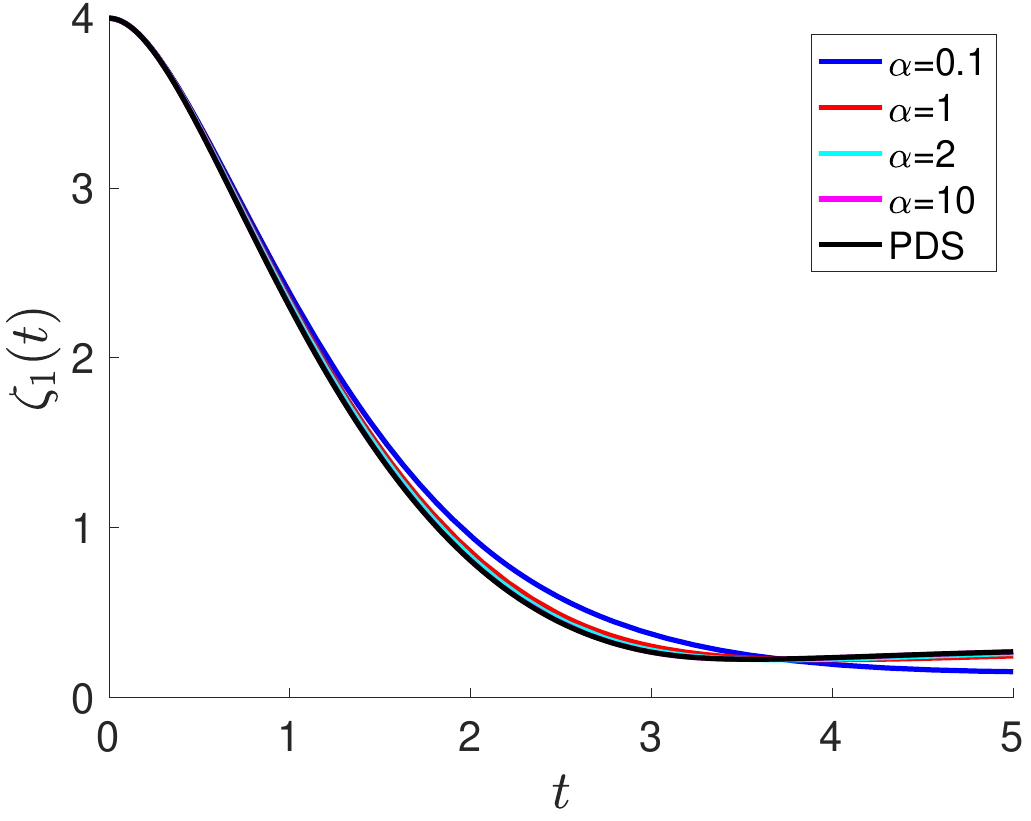}
    \caption{Evolution of state $\zeta_1$}
    \label{fig:feed_opti_state}
		\end{subfigure}~~
		\begin{subfigure}[t]{0.32\linewidth}
    \centering
    \includegraphics[scale=0.32]{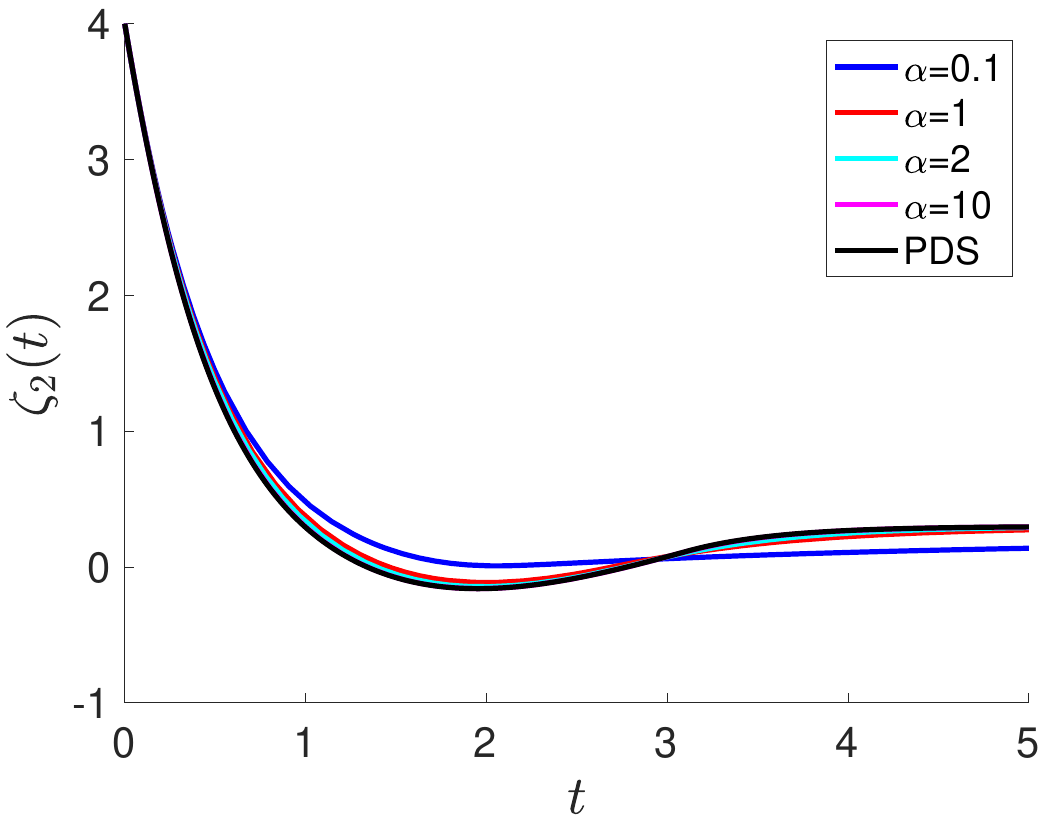}
    \caption{Evolution of state $\zeta_2$}
    \label{fig:feed_opti_state2}
		\end{subfigure}
		\caption{Evolution of the control input $\xi(t)$ and states $\zeta_1(t)$ and $\zeta_2(t)$ of system \eqref{eq:feedbck_opti_sys}, when the controller has PDS dynamics and CBF-based dynamics, for different values of $\alpha$.}
    \label{fig:states_input}
\end{figure*}
\begin{figure}[h!]
    \centering
    \includegraphics[scale=0.4]{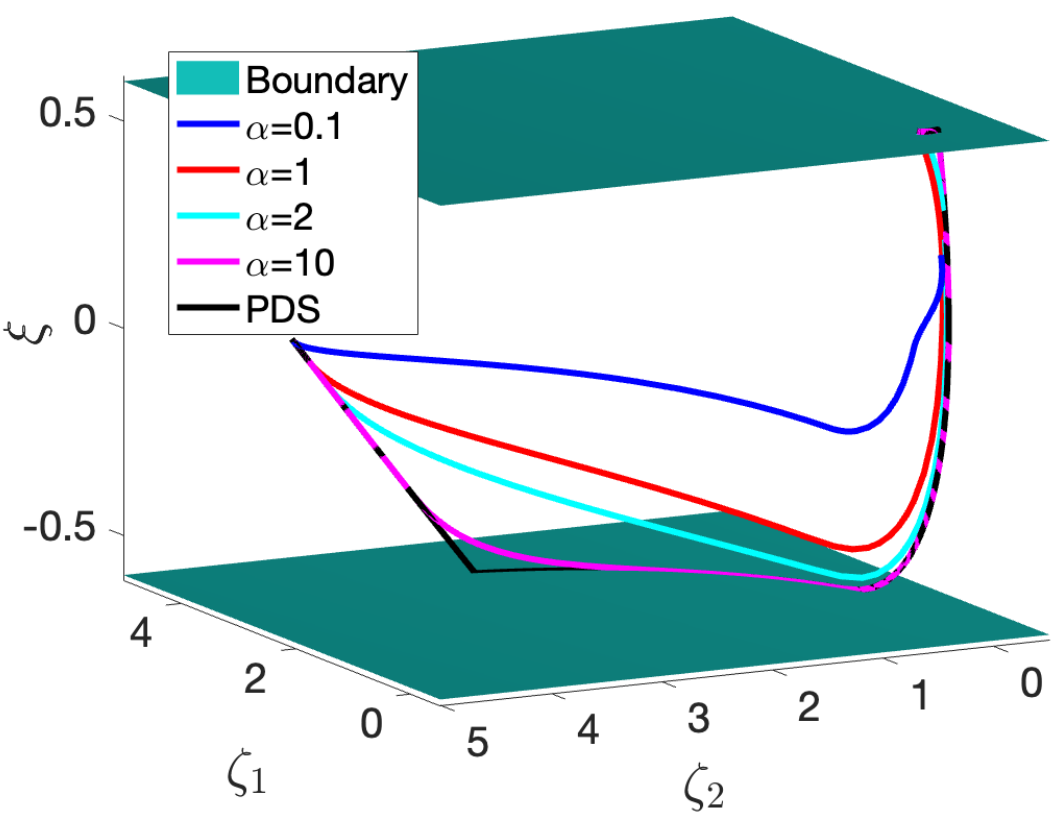}
    \caption{Trajectories $(\zeta(t),\xi(t))$ of system \eqref{eq:feedbck_opti_sys} coupled with dynamic controller, when the controller has PDS dynamics and CBF-based dynamics, for different values of $\alpha$.}
    \label{fig:feed_opti_3d}
\end{figure}
In \emph{feedback optimization}, a given system has to be steered via a dynamic controller to a steady state, which is specified as the solution to an optimization problem. This type of regulation finds applications in power grids and congestion control of communication networks, among others. See \cite{hauswirth2021optimization} and references therein for a thorough exposition. As an exemplary setup, we consider the plant
\begin{equation}\label{eq:feedbck_opti_sys}
    \dot{\zeta} = A\zeta + B\xi
\end{equation}
where $\xi$ is the state of a dynamic controller, to be designed. As commonly done in feedback-optimization scenarios (see \cite{hauswirth2021optimization}), we assume that $A$ is Hurwitz (i.e. the origin of the open-loop system is asymptotically stable). Thus, for every constant input $\xi_e$, there exists a unique steady-state equilibrium $\zeta_e = -A^{-1}B\xi_e$. We seek to drive the system to the solution of the following optimization problem:
\begin{equation}
\label{eq:feedback_opti_problem}
	\begin{aligned}
		\min_{\zeta,\xi} \quad \Phi(\zeta), \quad
		\mathrm{s.t.} \quad  \zeta = -A^{-1}B\xi
	\end{aligned}
\end{equation}
while respecting the input constraint $\xi(t)\in\Sset$ \emph{at all times}. The constraint in \eqref{eq:feedback_opti_problem} enforces exactly that the solution of the optimization problem has to comply with the fact that control system \eqref{eq:feedbck_opti_sys} behaves as the static input-output map $\zeta = -A^{-1}B\xi$, at the steady state.

Following \cite{hauswirth2020anti,hauswirth2021optimization}, under appropriate assumptions, the above problem can be solved by designing the dynamic controller as:
\begin{equation}\label{eq:feedback_opti_controller}
    \dot{\xi} \in \Pi_\Sset\Big(B^\top A^{-\top} \nabla \Phi(\zeta)\Big)
\end{equation}
which is a \emph{projected gradient flow}. In this example, we demonstrate how one may indeed approximate the PDS-based dynamics \eqref{eq:feedbck_opti_sys}-\eqref{eq:feedback_opti_controller}, by substituting the discontinuous controller \eqref{eq:feedback_opti_controller} with a CBF-based one, as in \eqref{eq:interconnection_cbf}. We consider the following data: 
\begin{align*}
    &A = \begin{bmatrix}
        -1 &1\\ 0 &-2
    \end{bmatrix}, \text{ } B = \begin{bmatrix}
        0\\ 1
    \end{bmatrix}, \text{ } \Phi(\zeta) = \Big(\zeta-\begin{bmatrix}
        1\\1
    \end{bmatrix}\Big)^2, \text{ }\Sset =  \{u\in\real:0.36-u^2\geq 0\}
\end{align*}
Observe that both Assumptions \ref{assum:sset_and_h} and \ref{assum:f_lipschitz} are satisfied by the data. In addition, as $A$ is Hurwitz and the controller $\xi(t)\in\Sset$, with $\Sset$ compact, it follows that trajectories of the PDS-based dynamics \eqref{eq:feedbck_opti_sys}-\eqref{eq:feedback_opti_controller} are bounded, and Thm. \ref{thm:stability_solution_convergence} applies.

We simulate trajectories of \eqref{eq:feedbck_opti_sys} coupled with a dynamic controller, which has either the PDS dynamics \eqref{eq:feedback_opti_controller} or CBF-based approximations thereof, for different values of $\alpha$. Figures \ref{fig:states_input} and \ref{fig:feed_opti_3d} depict the results. As expected from Thms. \ref{thm:solution_convergence} and \ref{thm:stability_solution_convergence}, we observe that, as $\alpha$ becomes larger, trajectories of the CBF-based dynamics approximate more closely trajectories of PDS-based ones. Moreover, as expected from \cite[Thms. 5.6 and 5.7]{allibhoy2023anytime}, since $\Sset$ and \eqref{eq:feedback_opti_problem} are convex, both PDS-based dynamics and CBF-based ones converge to the global minimizer $(\zeta_\star, \xi_\star) = (0.3,0.3,0.6)$.

\subsection{Control of a synchronverter}
\begin{figure*}[t!]
	\begin{subfigure}[t]{0.33\linewidth}
        \centering
        \includegraphics[scale=0.31]{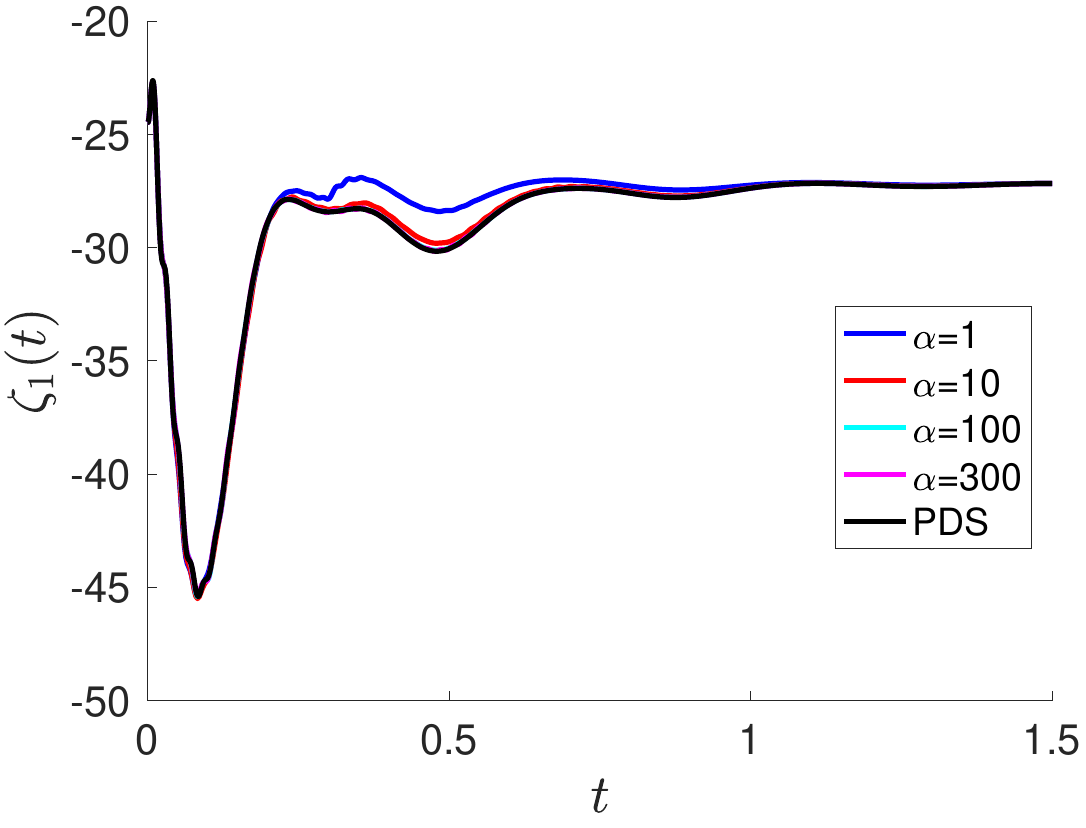}
        \caption{Evolution of state $\zeta_1 = i_d$}
        \label{fig:zeta}
	\end{subfigure}~~~
	\begin{subfigure}[t]{0.33\linewidth}
        \centering
        \includegraphics[scale=0.31]{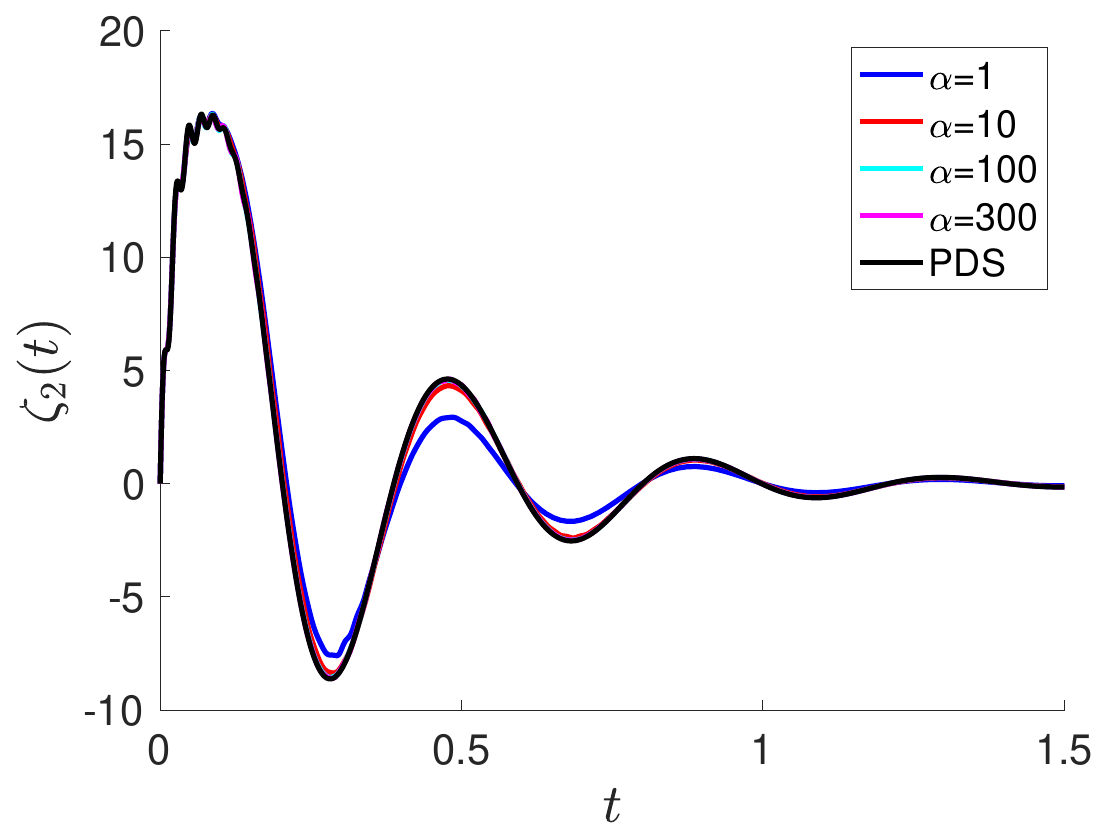}
        \caption{Evolution of state $\zeta_2=i_q$}
        \label{fig:zeta2}
	\end{subfigure}~~~
	\begin{subfigure}[t]{0.33\linewidth}
        \centering
        \includegraphics[scale=0.31]{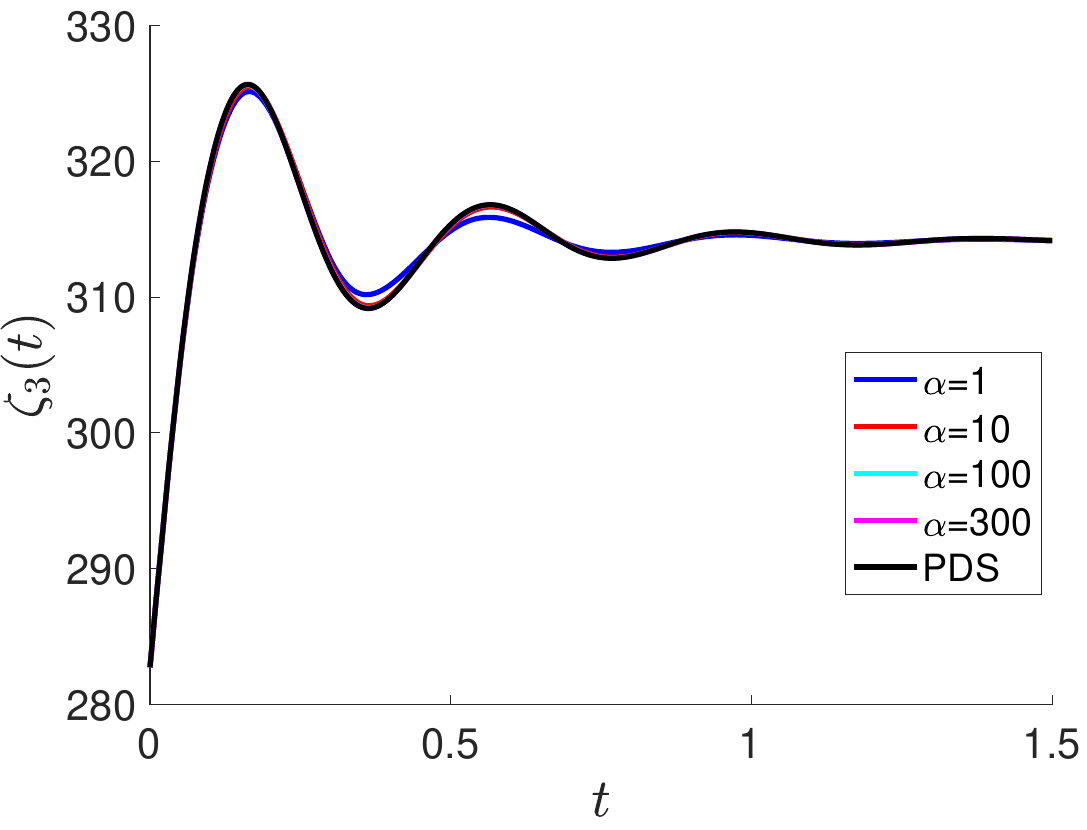}
        \caption{Evolution of state $\zeta_3=\omega$}
        \label{fig:zeta3}
	\end{subfigure}\\
    \begin{subfigure}[t]{0.33\linewidth}
        \centering
        \includegraphics[scale=0.31]{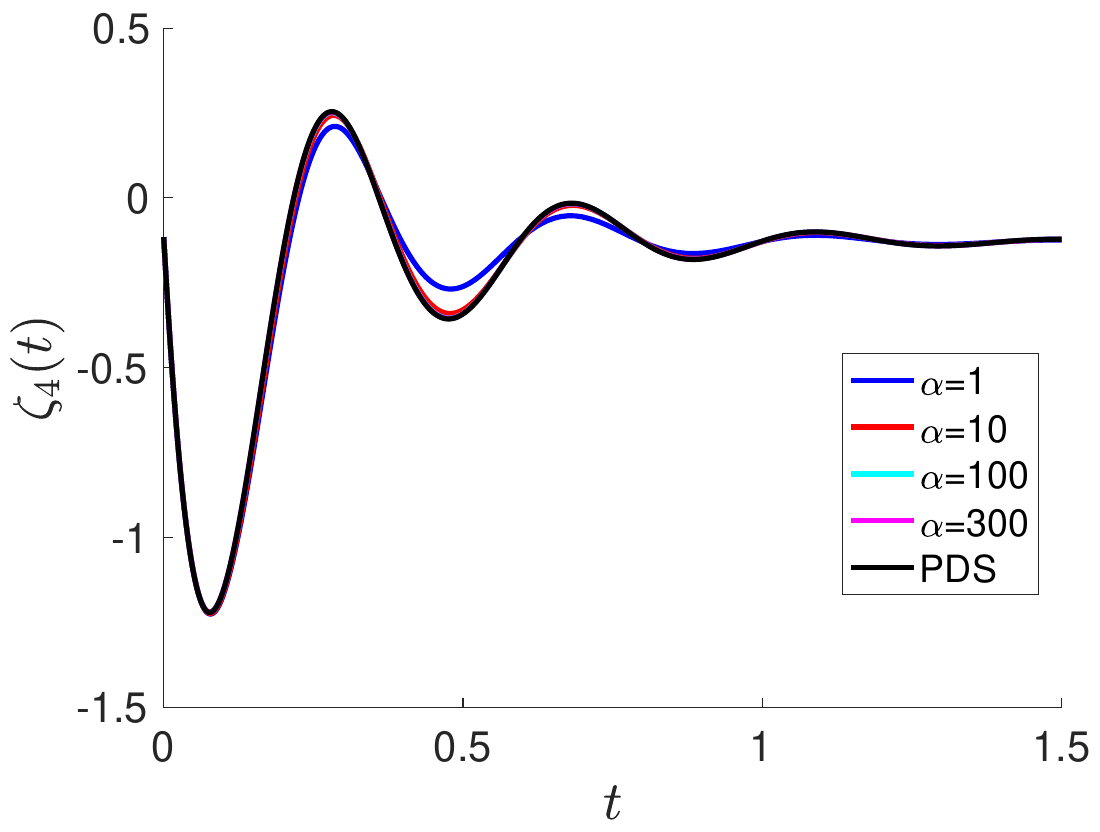}
        \caption{Evolution of state $\zeta_4=\delta$}
        \label{fig:zeta4}
	\end{subfigure}~~~
    \begin{subfigure}[t]{0.33\linewidth}
        \centering
        \includegraphics[scale=0.31]{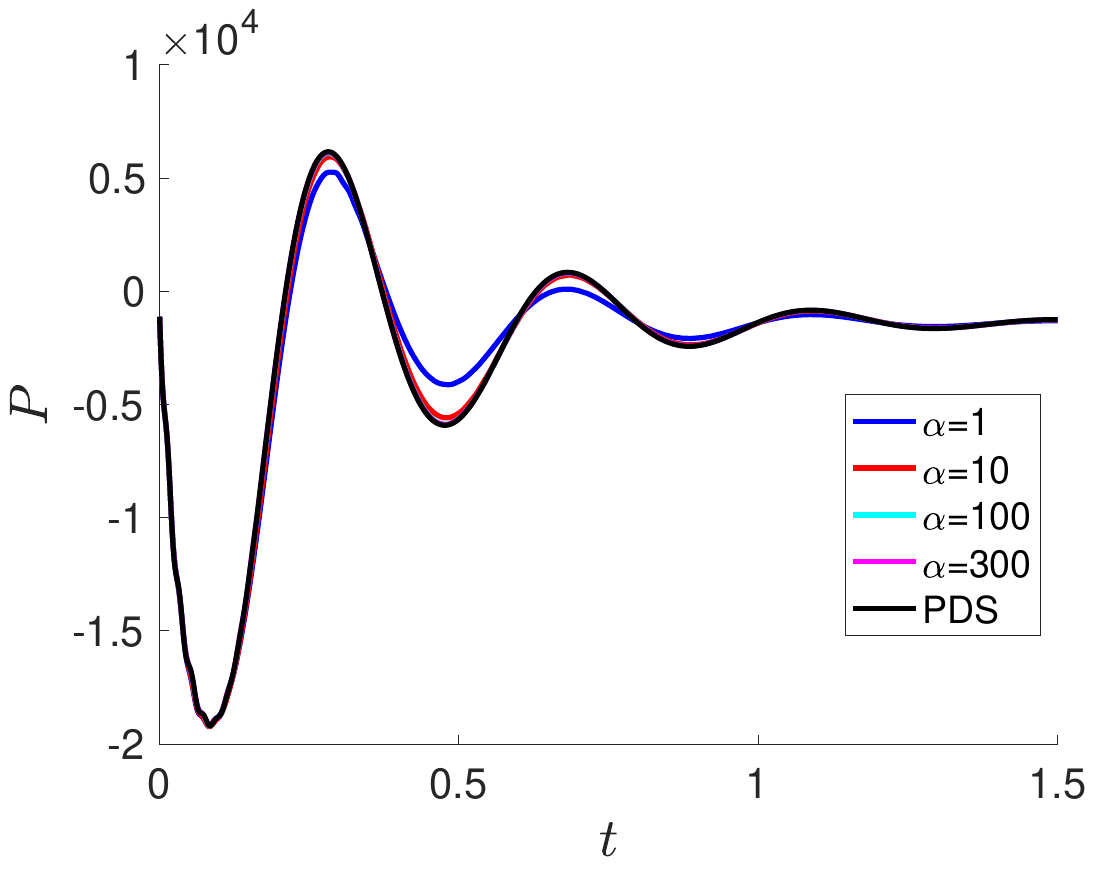}
        \caption{Evolution of output $y_1 = P$}
        \label{fig:P}
	\end{subfigure}~~~
	\begin{subfigure}[t]{0.33\linewidth}
        \centering
        \includegraphics[scale=0.31]{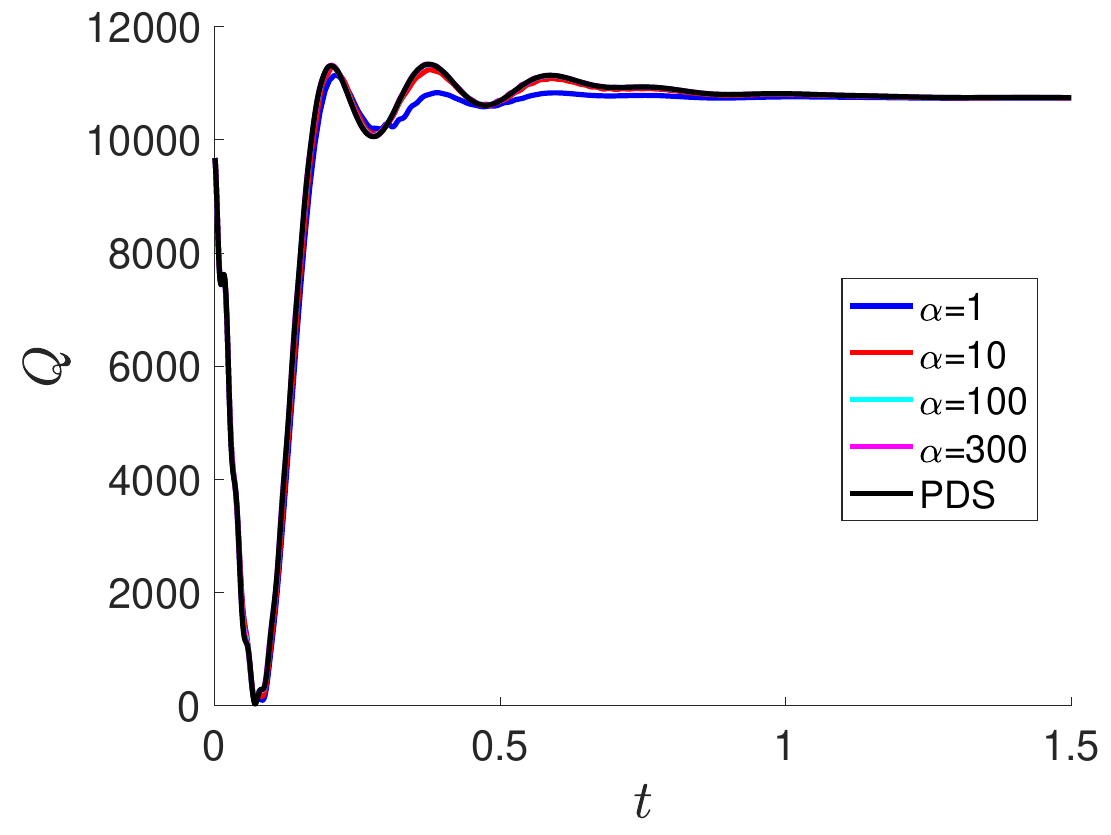}
        \caption{Evolution of output $y_2=Q$}
        \label{fig:Q}
	\end{subfigure}
    \caption{Evolution of the state $\zeta(t)$ and outputs $P,Q$, for controllers with PDS-based dynamics and CBF-based ones, for different $\alpha$.}
    \label{fig:lorenzetti}
\end{figure*}
\begin{figure}[h!]
    \centering
    \includegraphics[scale=0.3]{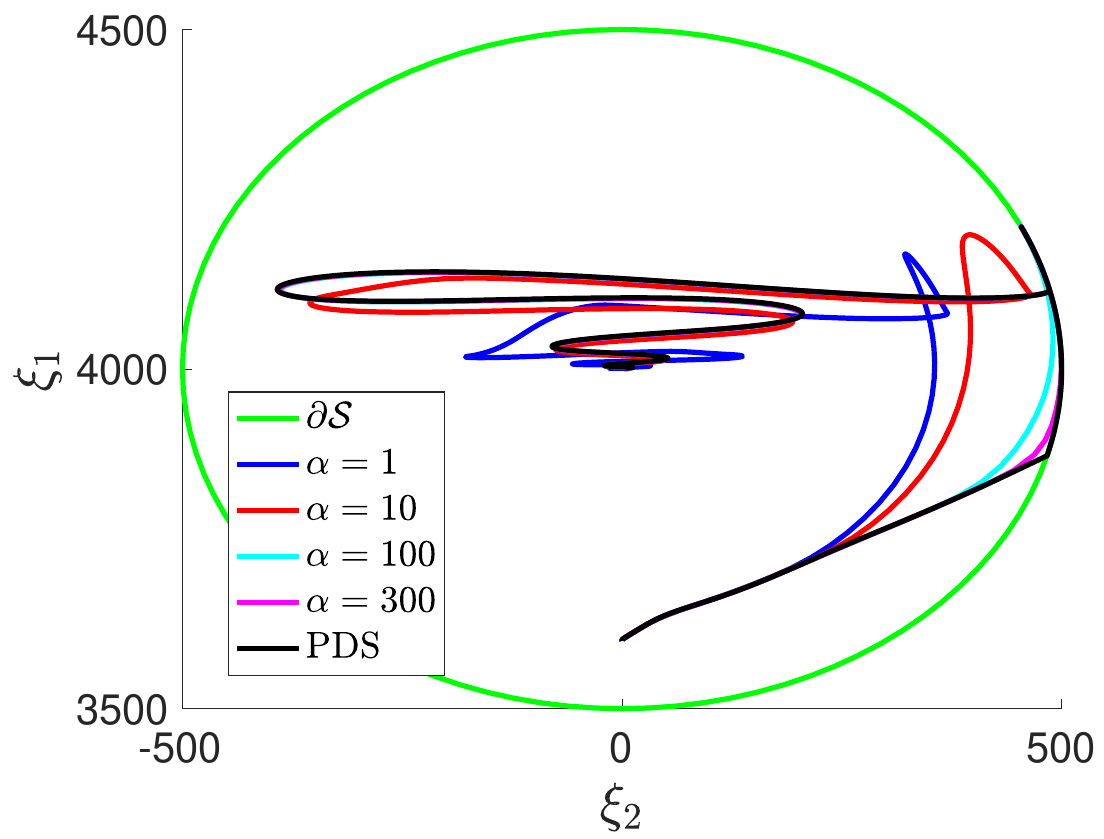}
    \caption{Trajectories $\xi(t)$ of the dynamic controller with PDS-based dynamics and CBF-based ones, for different $\alpha$.}
    \label{fig:xi_plane}
\end{figure}
Synchronverters are inverters that behave like synchronous generators (see \cite{lorenzetti2022pi}). The synchronverter's state is $\zeta = \begin{bmatrix}
    i_d &i_q &\omega &\delta
\end{bmatrix}^\top$, where $i_d$ and $i_q$ are the stator currents' $d$ and $q$ components, $\omega$ is the (virtual) rotor angular velocity, and $\delta$ is the power angle. The control input is $u=\begin{bmatrix}
    T_m &i_f
\end{bmatrix}^\top$, where $T_m$ is the (virtual) prime mover torque and $i_f$ is the (virtual) field current. The $\zeta$-dynamics is
\begin{equation} \label{eq:lorenzetti_state}
    \dot{\zeta} = H^{-1}A(\zeta,u)\zeta + H^{-1}v(\zeta,u), \quad y = G(\zeta)
\end{equation}
where $y=\begin{bmatrix}
    P &Q
\end{bmatrix}^\top$ is the active power $P$ and reactive power $Q$,
\begin{align*}
    &H=\begin{bmatrix}
            L &0 &0 &0\\
            0 &L &0 &0\\
            0 &0 &J &0\\
            0 &0 &0 &1\end{bmatrix}, \text{ }A(\zeta,u) = \begin{bmatrix}
        -R &\omega L &0 &0\\
        -\omega L &-R &-mi_f &0\\
        0 &-mi_f &-D_p &0\\
        0 &0 &1 &0
    \end{bmatrix}, \\ &v(\zeta,u) = \begin{bmatrix}
        V\sin\delta\\
        V\cos\delta\\
        T_m+D_p\omega_n\\
        -\omega_g 
    \end{bmatrix}, \text{ }G(\zeta) = -V\begin{bmatrix}
            \cos\delta &\sin\delta\\
            -\sin\delta &\cos\delta
        \end{bmatrix}\begin{bmatrix}
            i_q\\ i_d
        \end{bmatrix}
\end{align*}
and $V=230\sqrt{3}\unit{V}$, $L=56.75\unit{mH}$, $J=0.2\unit{m^2/rad}$, $R = 1.875 \unit{\Omega}$, $D_p = 3\unit{Nm/(rad/s)}$, $m=3.5\unit{H}$, $\omega_g=\omega_n=100\pi \unit{rad/s}$. 

The system admits a steady-state input-output map (see \cite{lorenzetti2022pi}). The control problem is to regulate the output $y$ to a reference $r$. In \cite{lorenzetti2022pi}, a PDS-like dynamic controller is proposed, for antiwindup control:
\begin{equation} \label{eq:lorenzetti_control}
    \dot{\xi} = \Pi_\Sset(\xi, r-y), \quad u = \mathcal{N}(\xi)
\end{equation}
where the constraint set $\Sset$ and the map $\mathcal{N}$ are to be designed.

We select $r = \begin{bmatrix}
    -1385\unit{W} &10738\unit{Var}
\end{bmatrix}$. A steady-state input that corresponds to this reference is $u_s = \begin{bmatrix}
    0 &0.8
\end{bmatrix}^\top$. The corresponding equilibrium is locally asymptotically stable, which enables Thm. \ref{thm:stability_solution_convergence}, as long as the system operates close to the equilibrium (which is a typical assumption). Following \cite{lorenzetti2022pi}, we choose $\mathcal{N}(\xi) = \begin{bmatrix}
    \tfrac{1}{50} &0\\ 0 &\tfrac{1}{5000}
\end{bmatrix}\xi$. We also select $\Sset = \Big\{x\in\real^2: \text{ }500^2 - x_1^2 - (x_2-4000)^2\geq 0 \Big\}$. Assumptions \ref{assum:sset_and_h} and \ref{assum:f_lipschitz} are satisfied. We simulate system \eqref{eq:lorenzetti_state} coupled with a controller with either PDS-based dynamics \eqref{eq:lorenzetti_control} or CBF-based ones, for different values of $\alpha$. The results are depicted in Figures \ref{fig:lorenzetti} and \ref{fig:xi_plane}. We observe that, as $\alpha$ becomes larger, trajectories of the CBF-based dynamics approximate more closely trajectories of PDS-based ones, confirming Thms. \ref{thm:solution_convergence} and \ref{thm:stability_solution_convergence}. Especially, cyan ($\alpha=100$) and magenta ($\alpha=300$) trajectories are almost indistinguishable from PDS ones. Moreover, as expected, the input constraints are respected by all controllers. Finally, the CBF-based controller drives the output to the reference, for any chosen $\alpha$.

\section{Conclusion}
We have proven that trajectories of CBF-based dynamics uniformly converge to those of PDS-based ones, as the parameter $\alpha$ of the CBF constraint approaches $\infty$. We have also proven that CBF-based dynamics are perturbations of PDSs, and quantified the perturbation. One can use our results to implement discontinuous PDS-based controllers in a continuous manner, employing CBFs. We have demonstrated this on a synchronverter control example. Our results also enable novel numerical simulation schemes for PDSs, through CBF-based dynamics, overcoming disadvantages of existing ones, such as projections to possibly non-convex sets. Finally, this bridge between PDSs and CBFs might yield other results in the future, such as insights on stability.
\section{Acknowledgements}
The authors would like to thank Dr. Pietro Lorenzetti for his invaluable help on the implementation of the synchronverter numerical example.
\section{Technical results and proofs} \label{sec:proofs}
\subsection{Proof of Prop. \ref{prop:perturbation}}\label{sec:proof_theorem_perturbation_open_loop}
To prove Prop. \ref{prop:perturbation}, we use Lemmas \ref{lem:neighbourhood}, \ref{lem:nonzero_grad} and \ref{lem:normalized_grad_lipschitz} below, which have already appeared in \cite{delimpaltadakis2023relationship}, in a simpler form. Here they are extended, to account for interconnections \eqref{eq:interconnection_DI} and \eqref{eq:interconnection_cbf}, rather than autonomous setups \eqref{eq:pds_di} and \eqref{eq:sys_cbf}, which was the context of \cite{delimpaltadakis2023relationship}.\footnote{Here, $f:\real^m\times\real^n\to\real^n$, that is $f$ has both $\xi$ and $\zeta$ as arguments.} Moreover, in \cite{delimpaltadakis2023relationship} the proofs of Lemmas \ref{lem:neighbourhood} and \ref{lem:nonzero_grad} were only sketched. Here, we provide the full proofs of Lemmas \ref{lem:neighbourhood} and \ref{lem:nonzero_grad} and recall Lemma \ref{lem:normalized_grad_lipschitz}. The following results are stated under the assumptions of Prop. \ref{prop:perturbation}. Given $z\in\Zset$, denote $U_{\mathrm{cbf},\alpha}(z):=\{x\in\Sset\mid \lie_f h(z,x) + \alpha h(x)\leq 0\}$.  
\begin{lemma}\label{lem:neighbourhood}
    Given $\alpha>0$ and $z\in\Zset$, consider $x\in U_{\mathrm{cbf},\alpha}(z)$ and $y\in\proj_{\partial\Sset}(x)$. It holds that $\|x-y\|\leq\gamma(\frac{1}{\alpha}|\lie_f h(z,x)|)$.
\end{lemma}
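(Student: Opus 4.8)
The plan is to bound $h(x)$ from above using the hypothesis $x \in U_{\mathrm{cbf},\alpha}(z)$, and then to convert this into a bound on $\|x-y\| = d(x,\partial\Sset)$ via the class-$\mathcal{K}_\infty$ estimate of Assumption~\ref{assum:sset_and_h}, item~\ref{assum:Kinf}. So the argument is just a short chain of inequalities.

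First I would unpack the membership: $x \in U_{\mathrm{cbf},\alpha}(z)$ means $x \in \Sset$ and $\lie_f h(z,x) + \alpha h(x) \leq 0$. Since $x \in \Sset$ implies $h(x) \geq 0$, and $\alpha > 0$, the CBF inequality gives $\lie_f h(z,x) \leq -\alpha h(x) \leq 0$, hence $|\lie_f h(z,x)| = -\lie_f h(z,x) \geq \alpha h(x)$, i.e. $0 \leq h(x) \leq \tfrac{1}{\alpha}|\lie_f h(z,x)|$.

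Next, note that $\partial\Sset$ is nonempty and compact: it is nonempty because, by Assumption~\ref{assum:sset_and_h}, item~\ref{assum:compactness}, $\Sset$ is a nonempty compact — hence proper — subset of the connected space $\real^n$, and it is compact since $\partial\Sset \subseteq \Sset$. Thus $\proj_{\partial\Sset}(x) \neq \emptyset$ and, for $y \in \proj_{\partial\Sset}(x)$, $\|x-y\| = d(x,\partial\Sset)$. By Assumption~\ref{assum:sset_and_h}, item~\ref{assum:Kinf}, $d(x,\partial\Sset) \leq \gamma(h(x))$; and since $\gamma \in \mathcal{K}_\infty$ is increasing, the bound $h(x) \leq \tfrac{1}{\alpha}|\lie_f h(z,x)|$ yields $\|x-y\| = d(x,\partial\Sset) \leq \gamma(h(x)) \leq \gamma\big(\tfrac{1}{\alpha}|\lie_f h(z,x)|\big)$, which is the claim.

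There is no substantial obstacle here; the only points needing a moment's care are the sign bookkeeping that turns the CBF inequality into an upper bound on the nonnegative quantity $h(x)$ (which is exactly where $x \in \Sset$ is used), and the final monotonicity step, for which $\gamma$ being class-$\mathcal{K}_\infty$ — in particular strictly increasing — is precisely what is needed. Note that the proof does not invoke the local Lipschitz hypotheses of Prop.~\ref{prop:perturbation}; those are only used in the subsequent lemmas and in the proof of Prop.~\ref{prop:perturbation} itself.
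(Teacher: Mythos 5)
Your proof is correct and follows essentially the same route as the paper's: bound $h(x)\leq -\tfrac{1}{\alpha}\lie_f h(z,x)=\tfrac{1}{\alpha}|\lie_f h(z,x)|$ from the CBF inequality and nonnegativity of $h$ on $\Sset$, then apply Assumption~\ref{assum:sset_and_h} item~\ref{assum:Kinf} and monotonicity of $\gamma$. The only difference is that you additionally justify well-definedness of the projection onto $\partial\Sset$, which the paper takes for granted.
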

\begin{proof}[Proof of Lemma \ref{lem:neighbourhood}]
    Assumption \ref{assum:sset_and_h} item \ref{assum:Kinf} implies that: 
    \begin{align*}
        \|x-y\|=d(x,\partial\Sset)\leq\gamma(h(x))&\leq\gamma(-\frac{1}{\alpha}\lie_f h(z,x))= \gamma(\frac{1}{\alpha}|\lie_f h(z,x)|)
    \end{align*}
    where we used $0\leq h(x) \leq -\frac{1}{\alpha}\lie_f h(z,x)$.
\end{proof}
\begin{lemma}\label{lem:nonzero_grad}
For any $\alpha\geq \alpha_*$ and $z\in\Zset$, for all $x\in U_{\mathrm{cbf},\alpha}(z)$, we have $0<\epsilon\leq\|\nabla h(x)\|\leq M_3$.
\end{lemma}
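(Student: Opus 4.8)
The plan is to control $\|\nabla h(x)\|$ for $x\in U_{\mathrm{cbf},\alpha}(z)$ by comparing it to the gradient norm at a nearest boundary point, exploiting that any such $x$ lies close to $\partial\Sset$ (Lemma~\ref{lem:neighbourhood}) together with the Lipschitz continuity of $\nabla h$ on the compact set $\Sset$ (Assumption~\ref{assum:sset_and_h}, item~\ref{assum:h_C11}). First I would observe that, since $\Sset$ is compact, $\partial\Sset$ is nonempty and compact, so $\proj_{\partial\Sset}(x)\neq\emptyset$; fix $y\in\proj_{\partial\Sset}(x)$. Lemma~\ref{lem:neighbourhood} gives $\|x-y\|\leq\gamma\bigl(\tfrac1\alpha|\lie_f h(z,x)|\bigr)$, hence $\|\nabla h(x)-\nabla h(y)\|\leq L_{\nabla h}\gamma\bigl(\tfrac1\alpha|\lie_f h(z,x)|\bigr)$. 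Since $y\in\partial\Sset$ we have $M_1\leq\|\nabla h(y)\|\leq M_2$, with $M_1>0$ by Assumption~\ref{assum:sset_and_h}, item~\ref{assum:regularity}, and the forward and reverse triangle inequalities then yield
\begin{equation*}
M_1-L_{\nabla h}\gamma\bigl(\tfrac1\alpha|\lie_f h(z,x)|\bigr)\;\leq\;\|\nabla h(x)\|\;\leq\;M_2+L_{\nabla h}\gamma\bigl(\tfrac1\alpha|\lie_f h(z,x)|\bigr).
\end{equation*}

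For the upper bound, I would use that $\gamma$ is increasing, $\alpha\geq\alpha_*$, and $|\lie_f h(z,x)|\leq\max_{(z,x)\in\Zset\times\Sset}|\lie_f h(z,x)|$, so $\gamma\bigl(\tfrac1\alpha|\lie_f h(z,x)|\bigr)\leq\gamma\bigl(\tfrac1{\alpha_*}\max_{(z,x)\in\Zset\times\Sset}|\lie_f h(z,x)|\bigr)$, whence the right-hand side of the display is at most $M_3$. For the lower bound it suffices to show $L_{\nabla h}\gamma\bigl(\tfrac1\alpha|\lie_f h(z,x)|\bigr)\leq M_1-\epsilon$. Since $0<\epsilon<M_1$ makes $\tfrac{M_1-\epsilon}{L_{\nabla h}}>0$ and $\gamma\in\mathcal{K}_\infty$ is a strictly increasing bijection of $[0,\infty)$ with strictly increasing inverse, this is equivalent to $\tfrac1\alpha|\lie_f h(z,x)|\leq\gamma^{-1}\bigl(\tfrac{M_1-\epsilon}{L_{\nabla h}}\bigr)$, i.e.\ $\alpha\geq|\lie_f h(z,x)|/\gamma^{-1}\bigl(\tfrac{M_1-\epsilon}{L_{\nabla h}}\bigr)$; the right-hand side is $\leq\alpha_*$ by the very definition of $\alpha_*$ (recall $M_1=\min_{z\in\partial\Sset}\|\nabla h(z)\|$), and $\alpha\geq\alpha_*$ by hypothesis, so $\|\nabla h(x)\|\geq\epsilon$.

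I do not anticipate a genuine obstacle: the argument is essentially one triangle inequality plus monotonicity of $\gamma$ and $\gamma^{-1}$. The only points deserving care are (i) that $U_{\mathrm{cbf},\alpha}(z)\subseteq\{x\in\Sset:0\leq h(x)\leq\tfrac1\alpha|\lie_f h(z,x)|\}$, which in particular forces $\lie_f h(z,x)\leq0$ on $U_{\mathrm{cbf},\alpha}(z)$ and is precisely what makes Lemma~\ref{lem:neighbourhood} applicable; (ii) nonemptiness of $\proj_{\partial\Sset}(x)$, from compactness of $\partial\Sset$; and (iii) well-definedness and monotonicity of $\gamma^{-1}$, from $\gamma\in\mathcal{K}_\infty$, which legitimizes the chain of equivalences in the lower-bound step.
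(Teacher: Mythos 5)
Your proposal is correct and follows essentially the same route as the paper's proof: compare $\nabla h(x)$ with $\nabla h(y)$ at a nearest boundary point via the Lipschitz bound and Lemma~\ref{lem:neighbourhood}, then unwind the definitions of $M_1$, $M_3$, $\epsilon$ and $\alpha_*$ (your $\gamma^{-1}$-inversion for the lower bound is just the algebraic rearrangement of the paper's direct substitution of $\alpha_*$). The extra care you take with nonemptiness of the projection and monotonicity of $\gamma^{-1}$ is sound but adds nothing beyond the paper's argument.
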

\begin{proof}[Proof of Lemma \ref{lem:nonzero_grad}] Given $\alpha>0$, consider any $x\in U_{\mathrm{cbf},\alpha}(z)$. From Assumption \ref{assum:h_C11}, we have the following, for any $y\in\mathrm{proj}_{\partial\Sset}(x)$:
    \begin{align*}
        \|\nabla h(x) - \nabla h(y) \|\leq L_{\nabla h}\|x-y\|\implies 
        \|\nabla h(x)\|\geq \|\nabla h(y)\|-L_{\nabla h}\|x-y\|\implies&\\
        \|\nabla h(x)\|\geq\|\nabla h(y)\| - L_{\nabla h}\gamma(|\frac{1}{\alpha}\lie_f h(z,x)|)\quad&
    \end{align*}
    where in the last step we have used Lemma~\ref{lem:neighbourhood}. Thus, for any $\alpha\geq \alpha_*$, and for any $x\in U_{\mathrm{cbf},\alpha}(z)$, we get:
    \begin{align*}
        \|\nabla h(x)\|&\geq M_1 - L_{\nabla h}\gamma(\frac{1}{\alpha}\max_{(z,x)\in\Zset\times\Sset}|\lie_f h(z,x)|) \\&\geq M_1 - L_{\nabla h}\gamma(\frac{1}{\alpha_*}\max_{(z,x)\in\Zset\times\Sset}|\lie_f h(z,x)|) = \epsilon >0
    \end{align*}
    Similarly, for any $\alpha\geq \alpha_*$ and for any $x\in U_{\mathrm{cbf},\alpha}(z)$, $\|\nabla h(x) - \nabla h(y) \|\leq L_{\nabla h}\|x-y\|$ implies:
    \begin{align*}
        \|\nabla h(x)\|\leq \|\nabla h(y)\|+L_{\nabla h}\|x-y\|&\leq M_2 + L_{\nabla h}\gamma(\frac{1}{\alpha}|\lie_f h(z,x)|)\\
        &\leq M_2 + \gamma(\frac{1}{\alpha_*}|\lie_f h(z,x)|)\\
        &\leq M_3
    \end{align*}
\end{proof}
\begin{lemma}\label{lem:normalized_grad_lipschitz}
    Given any $\alpha\geq \alpha_*$ and $z\in\Zset$, it holds that, for any $x\in U_{\mathrm{cbf},\alpha}(z)$ and $y\in\mathrm{proj}_{\partial\Sset}(x)$
    \begin{equation*}
        \Big\|\frac{P^{-1}\nabla h(x)}{\|\nabla h(x)\|^2_{P^{-1}}} - \frac{P^{-1}\nabla h(y)}{\|\nabla h(y)\|^2_{P^{-1}}}\Big\|\leq L_1\|x-y\|
    \end{equation*}
    where $L_1 := \frac{\overline{\lambda}(P)}{\underline{\lambda}(P)\epsilon^2}L_{\nabla h}\biggl[1 + \frac{M_2\overline{\lambda}(P)\Big(M_2+M_3\Big)}{\underline{\lambda}(P)M_1^2}\bigg]$.
\end{lemma}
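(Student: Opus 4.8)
\emph{Proof proposal.} The plan is to reduce the inequality to a few elementary estimates on $P^{-1}$ together with the uniform bounds on $\|\nabla h\|$ already available from Lemmas~\ref{lem:neighbourhood} and~\ref{lem:nonzero_grad}. Abbreviate $a:=\nabla h(x)$ and $b:=\nabla h(y)$, and decompose the difference by a single telescoping step,
\begin{equation*}
\frac{P^{-1}a}{\|a\|_{P^{-1}}^2}-\frac{P^{-1}b}{\|b\|_{P^{-1}}^2}
= \frac{P^{-1}(a-b)}{\|a\|_{P^{-1}}^2}
+ P^{-1}b\left(\frac{1}{\|a\|_{P^{-1}}^2}-\frac{1}{\|b\|_{P^{-1}}^2}\right),
\end{equation*}
so that it suffices to bound each of the two summands by a multiple of $\|x-y\|$ and check that the multipliers add up to $L_1$. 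Throughout, I would use the two standard consequences of $P\in\mathbb{S}_+^n$, namely $\|P^{-1}v\|\le \|v\|/\underline{\lambda}(P)$ and $v^\top P^{-1}v\ge \|v\|^2/\overline{\lambda}(P)$, and I would recall that $y\in\partial\Sset$ gives $M_1\le\|\nabla h(y)\|\le M_2$, while $x\in U_{\mathrm{cbf},\alpha}(z)$ with $\alpha\ge\alpha_*$ gives $\epsilon\le\|\nabla h(x)\|\le M_3$ by Lemma~\ref{lem:nonzero_grad}.

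For the first summand: the numerator obeys $\|P^{-1}(a-b)\|\le \|a-b\|/\underline{\lambda}(P)\le (L_{\nabla h}/\underline{\lambda}(P))\|x-y\|$ by Assumption~\ref{assum:sset_and_h} item~\ref{assum:h_C11}, and the denominator obeys $\|a\|_{P^{-1}}^2\ge \|\nabla h(x)\|^2/\overline{\lambda}(P)\ge \epsilon^2/\overline{\lambda}(P)$; so the first summand is at most $(\overline{\lambda}(P)/(\underline{\lambda}(P)\epsilon^2))L_{\nabla h}\|x-y\|$, which is exactly the ``$1$'' contribution inside $L_1$. For the second summand I would write $\tfrac{1}{\|a\|_{P^{-1}}^2}-\tfrac{1}{\|b\|_{P^{-1}}^2}=(\|b\|_{P^{-1}}^2-\|a\|_{P^{-1}}^2)/(\|a\|_{P^{-1}}^2\|b\|_{P^{-1}}^2)$, use the identity $b^\top P^{-1}b-a^\top P^{-1}a=(b-a)^\top P^{-1}b+a^\top P^{-1}(b-a)$ to get $|\,\|b\|_{P^{-1}}^2-\|a\|_{P^{-1}}^2\,|\le (1/\underline{\lambda}(P))\|a-b\|(\|a\|+\|b\|)\le (L_{\nabla h}/\underline{\lambda}(P))(M_3+M_2)\|x-y\|$, and bound $\|P^{-1}b\|\le M_2/\underline{\lambda}(P)$, $\|a\|_{P^{-1}}^2\ge \epsilon^2/\overline{\lambda}(P)$, $\|b\|_{P^{-1}}^2\ge M_1^2/\overline{\lambda}(P)$. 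Multiplying these gives the bound $\big(M_2(M_2+M_3)\overline{\lambda}(P)/(\underline{\lambda}(P)M_1^2)\big)\cdot(\overline{\lambda}(P)/(\underline{\lambda}(P)\epsilon^2))L_{\nabla h}\|x-y\|$ on the second summand, which is precisely the remaining term inside $L_1$. Adding the two contributions and factoring out $\overline{\lambda}(P)L_{\nabla h}/(\underline{\lambda}(P)\epsilon^2)$ recovers the stated constant.

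There is no real obstacle here; the lemma is a deterministic computation once Lemmas~\ref{lem:neighbourhood} and~\ref{lem:nonzero_grad} are in place. The only points where care is needed — and where a sign or constant could be lost — are (i) applying the two different eigenvalue inequalities to the right objects ($\|P^{-1}v\|$ uses $\underline{\lambda}(P)$, while $v^\top P^{-1}v$ uses $\overline{\lambda}(P)$), and (ii) using the \emph{sharper} lower bound $M_1$ rather than $\epsilon$ on $\|\nabla h(y)\|$ for the boundary point $y\in\proj_{\partial\Sset}(x)$, since this is what produces the $M_1^2$ (not $\epsilon^2$) in the second term of $L_1$. The compactness/regularity hypotheses guarantee all the constants $M_1,M_2,M_3,\epsilon,L_{\nabla h}$ are finite and $M_1,\epsilon>0$, so the final constant is well defined.
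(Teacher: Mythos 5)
Your proof is correct, and since the paper only defers this lemma to the preliminary work \cite{delimpaltadakis2023relationship}, the fact that your two summands reproduce exactly the two terms inside the bracket of $L_1$ (the ``$1$'' and the $M_2\overline{\lambda}(P)(M_2+M_3)/(\underline{\lambda}(P)M_1^2)$ term) shows you have reconstructed essentially the same telescoping argument. All the eigenvalue estimates and the use of $M_1$ versus $\epsilon$ for the boundary point $y$ are applied correctly, so nothing further is needed.
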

\begin{proof}[Proof of Lemma \ref{lem:normalized_grad_lipschitz}] See \cite{delimpaltadakis2023relationship}.
\end{proof}

We are ready to prove Prop. \ref{prop:perturbation}.
\begin{proof}[Proof of Prop. \ref{prop:perturbation}] Observe that both $\sigma(\alpha)$ and $\delta$ are well-defined, as they are maxima of continuous locally bounded functions over compact sets. For $x \in \Sset$, we distinguish the following cases:

\paragraph{Case 1} $\lie_f h(z,x) + \alpha h(x)>0$. Here we have $f_{\mathrm{cbf},\alpha}(z,x)=f(z,x)$, and the result holds trivially.

\paragraph{Case 2} $\lie_f h(z,x) + \alpha h(x)\leq 0$. In this case, $x\in U_{\mathrm{cbf},\alpha}(z)$ and we can write
\begin{equation}
    f_{\mathrm{cbf},\alpha}(z,x) = f(z,x) - \frac{\lie_f h(z,x) + \alpha h(x)}{\|\nabla h(x)\|^2_{P^{-1}}}P^{-1}\nabla h(x)
\end{equation}
which is well-defined, as $\nabla h(x)\neq 0$, from Lemma \ref{lem:nonzero_grad}.

Consider any $y\in\mathrm{proj}_{\partial\Sset}(x)$. Observe that $\eta := \Big(\lie_f h(z,x) + \alpha h(x)\Big)\frac{\nabla h(y)}{\|\nabla h(y)\|^2_{P^{-1}}}\in N_{\Sset}(y)$, since $\frac{\lie_f h(z,x) + \alpha h(x)}{\|\nabla h(y)\|^2_{P^{-1}}}\leq 0$. Also, $\nabla h(y) \neq 0$, due to Assumption \ref{assum:sset_and_h} item \ref{assum:h_C11}. We have the following:
{\allowdisplaybreaks
\begin{align*}
     \Big\|f(z,y) - P^{-1}\eta - f(z,x) + \frac{\lie_f h(z,x) + \alpha h(x)}{\|\nabla h(x)\|^2_{P^{-1}}}P^{-1}\nabla h(x)\Big\|\leq&\\
    \|f(z,y)-f(z,x)\| + |\lie_f h(z,x) + \alpha h(x)|\Big\|\frac{P^{-1}\nabla h(y)}{\|\nabla h(y)\|^2_{P^{-1}}}-\frac{P^{-1}\nabla h(x)}{\|\nabla h(x)\|^2_{P^{-1}}}\Big\|\leq&\\
    L_f\|x-y\|+|\lie_f h(z,x) + \alpha h(x)|L_1\|x-y\|\leq&\\
    L_f\|x-y\|+|\lie_f h(z,x)|L_1\|x-y\|\quad\leq&\\
    \underbrace{\Big(L_f+L_1|\lie_f h(z,x)|\Big)\gamma(\frac{1}{\alpha}|\lie_f h(z,x)|)}_{\sigma_1(\alpha,z,x)}\quad
\end{align*}}
where in the second inequality we used Assumption \ref{assum:f_lipschitz}, in the third one we used Lemma \ref{lem:normalized_grad_lipschitz}, in the fourth one we used $|\lie_f h(z,x)+\alpha h(x)|\leq|\lie_f h(z,x)|$, and in the fifth one we used Lemma \ref{lem:neighbourhood}. 

Regarding $P^{-1}\eta$, notice that
\allowdisplaybreaks{\begin{align*}
    \|P^{-1}\eta\| &\leq \frac{1}{\underline{\lambda}(P)}\|\eta\|\\
    &\leq\frac{1}{\underline{\lambda}(P)}|\lie_f h(z,x) + \alpha h(x)| \Big\|\frac{\nabla h(y)}{\|\nabla h(y)\|^2_{P^{-1}}}\Big\|\\
    &\leq \frac{1}{\underline{\lambda}(P)}|\lie_f h(z,x)|\frac{\|\nabla h(y)\|}{\|\nabla h(y)\|^2_{P^{-1}}}\\
    &\leq \frac{1}{\underline{\lambda}(P)}\|f(z,x)\|\|\nabla h(x)\|\frac{\|\nabla h(y)\|}{\|\nabla h(y)\|^2_{P^{-1}}}\\
    &\leq \frac{\|f(z,x)\|}{\underline{\lambda}(P)}\Big(\|\nabla h(y)\|+L_{\nabla h}\|x-y\|\Big)\frac{\|\nabla h(y)\|}{\|\nabla h(y)\|^2_{P^{-1}}}\\
    &\leq \frac{1}{\underline{\lambda}(P)}\Big(\frac{1}{\underline{\lambda}(P^{-1})}+\frac{L_{\nabla h}\|x-y\|}{\underline{\lambda}(P^{-1})\|\nabla h(y)\|}\Big)\|f(z,x)\|\\
    &\leq \Big(1+\frac{L_{\nabla h}\gamma(\frac{1}{\alpha}|\lie_f h(z,x)|)}{M_1}\Big)\frac{\overline{\lambda}(P)}{\underline{\lambda}(P)}\|f(z,x)\|\\
    &\leq\delta
\end{align*}}
where in the fifth inequality we used $\|\nabla h(x)\|\leq \|\nabla h(y)\|+L_{\nabla h}\|x-y\|$, as proven in the proof of Lemma \ref{lem:nonzero_grad}, and in the last one we used Lemma \ref{lem:neighbourhood}.

Finally, from the above we get:
\allowdisplaybreaks{\begin{align*}
    f_{\mathrm{cbf},\alpha}(x) =&\\ 
    f(z,x) - \frac{\lie_f h(z,x) + \alpha h(x)}{\|\nabla h(x)\|^2_{P^{-1}}}P^{-1}\nabla h(x) \in&\\
    f(z,y) - P^{-1}\eta+\sigma_1(\alpha,z,x)\ball\subseteq&\\
    f(z,y) - P^{-1}N_{\Sset}(y)\cap\delta\ball+\sigma_1(\alpha,z,x)\ball\subseteq&\\
    f\Big((x+\gamma(\frac{1}{\alpha}|\lie_f h(z,x)|)\ball)\cap\Sset\Big) - P^{-1}N_{\Sset}\Big((x+\gamma(\frac{1}{\alpha}|\lie_f h(z,x)|)\ball)\cap\Sset\Big)\cap\delta\ball+\sigma_1(\alpha,z,x)\ball\subseteq&\\
    f\Big((x+\sigma(\alpha)\ball)\cap\Sset\Big) - P^{-1}N_{\Sset}\Big((x+\sigma(\alpha)\ball)\cap\Sset\Big)\cap\delta\ball+\sigma(\alpha)\ball=&\\
    K_{\sigma(\alpha)}\bigg(\normaldi^P_\Sset\Big(x,f(z,x),\delta\Big)\bigg)\quad& 
\end{align*}}
\end{proof}

\subsection{Proofs of Thms. \ref{thm:solution_convergence} and \ref{thm:stability_solution_convergence}}
To prove Thms. \ref{thm:solution_convergence} and \ref{thm:stability_solution_convergence}, we employ several results from \cite{hybrid_book}. The first step to proving Thms. \ref{thm:solution_convergence} and \ref{thm:stability_solution_convergence} is to establish \emph{well-posedness}\footnote{Well-posedness of a DI implies that solutions of its $\sigma$-perturbations converge to the DI's solutions, as $\sigma\to 0$. See \cite[Def. 6.29]{hybrid_book}.}, in the sense of \cite[Def. 6.29]{hybrid_book}, of limiting DI \eqref{eq:limit_DI}.
\begin{proposition}\label{prop:well-posed-limit-DI}
    Consider DI \eqref{eq:limit_DI}. Let Assumptions \ref{assum:sset_and_h} and \ref{assum:f_lipschitz} hold. Let $g$ be continuous and locally bounded. Then, DI \eqref{eq:limit_DI} is well-posed.
\end{proposition}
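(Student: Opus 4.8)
\textbf{Proof plan for Proposition \ref{prop:well-posed-limit-DI}.}

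The plan is to verify that the right-hand side of DI \eqref{eq:limit_DI} satisfies the hypotheses of \cite[Thm. 6.30]{hybrid_book} (the \emph{basic assumptions} that guarantee well-posedness), namely: (i) the constraint set $\real^m\times\Sset$ is closed; (ii) the set-valued map is outer semicontinuous and locally bounded relative to $\real^m\times\Sset$; and (iii) it takes nonempty convex values on $\real^m\times\Sset$. Item (i) is immediate since $\Sset$ is closed by Assumption \ref{assum:sset_and_h}. For the product structure, it suffices to treat the two blocks separately: the $g$-block is single-valued, and a continuous locally bounded single-valued map is trivially outer semicontinuous, locally bounded, and (trivially) convex-valued. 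So the crux is the $\xi$-block, the map $(z,x)\mapsto \normaldi^P_\Sset(x,f(z,x),\delta) = f(z,x) - \big(P^{-1}N_\Sset(x)\cap\delta\ball\big)$.

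The key steps for the $\xi$-block, in order, are as follows. First, \emph{nonemptiness and convexity of values}: for each $x\in\Sset$, $N_\Sset(x)$ is a closed convex cone (from \eqref{eq:cones_proxregular} it is either $\{0\}$ or a closed ray), so $P^{-1}N_\Sset(x)$ is closed convex, its intersection with the convex set $\delta\ball$ is closed convex and contains $0$, hence nonempty; translating by the vector $f(z,x)$ preserves nonemptiness and convexity. Second, \emph{local boundedness}: on any bounded set, $\|f(z,x)\|$ is bounded (continuity of $f$) and the truncated normal-cone part lies in $\delta\ball$, so the whole map is bounded; this is exactly where the truncation radius $\delta$ introduced in Prop. \ref{prop:perturbation} is used. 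Third, \emph{outer semicontinuity}: it suffices to show the graph of $(z,x)\mapsto P^{-1}N_\Sset(x)\cap\delta\ball$ is closed relative to $\real^m\times\Sset$, since adding the continuous function $f(z,x)$ preserves closedness of the graph. Closedness of the graph of $x\mapsto N_\Sset(x)$ relative to $\Sset$ is a standard fact for Clarke regular (indeed prox-regular) sets — it follows from the definition of the (limiting/regular) normal cone, or may be cited from \cite{rockafellar2009variational}; intersecting with the fixed compact ball $\delta\ball$ and applying the bounded linear map $P^{-1}$ both preserve graph-closedness. Combining the two blocks, the product map is outer semicontinuous, locally bounded, and nonempty-convex-valued relative to $\real^m\times\Sset$, so \cite[Thm. 6.30]{hybrid_book} yields well-posedness.

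The main obstacle I expect is the outer semicontinuity of the normal-cone part: one must be careful that $x\mapsto N_\Sset(x)$ genuinely has closed graph, which hinges on prox-regularity of $\Sset$ (a consequence of Assumption \ref{assum:sset_and_h} items \ref{assum:h_C11} and \ref{assum:regularity}, as noted after the assumption) so that all standard notions of normal cone coincide and the explicit formula \eqref{eq:cones_proxregular} applies; using the formula, graph-closedness reduces to the elementary observation that if $x_i\to x$ in $\Sset$ and $\eta_i=\lambda_i\nabla h(x_i)$ with $\lambda_i\le 0$ converges, then either the $x_i$ are eventually interior (forcing the limit to be $0\in N_\Sset(x)$) or $x\in\partial\Sset$ and $\lambda_i$ converges to some $\lambda\le 0$ by continuity of $\nabla h$ and the bound $\|\nabla h(x)\|\ge M_1>0$ near $\partial\Sset$ from Assumption \ref{assum:sset_and_h} item \ref{assum:regularity}. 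The intersection with $\delta\ball$ is harmless because $\delta\ball$ is a fixed compact convex set. Everything else is routine.
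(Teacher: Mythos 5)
Your proposal is correct and follows essentially the same route as the paper: verify the hypotheses of \cite[Thm. 6.30]{hybrid_book} blockwise, with convexity from the translated intersection of the normal cone and $\delta\ball$, local boundedness from the truncation by $\delta$, and outer semicontinuity from graph-closedness of $x\mapsto N_\Sset(x)$ (which the paper simply cites as \cite[Prop. 6.6]{rockafellar2009variational}, where you additionally sketch an elementary argument via \eqref{eq:cones_proxregular}). No gaps.
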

\begin{proof}
    First, $\real^m\times\Sset$ is closed. Second, $g$ is locally bounded and outer semicontinuous. Being a singleton, it is also a convex set-valued map. Further, $\normaldi^P_\Sset\Big(x,f(z,x),\delta\Big)$ is: (a) locally bounded, as $f$ is locally Lipschitz and the normal cone is truncated by the bounded quantity $\delta$; (b) convex, as a translation of the intersection of two convex sets (the normal cone and a ball); (c) outer semicontinuous, since its graph is closed (outer semicontinuity of $N_\Sset(x)$ follows from \cite[Prop. 6.6]{rockafellar2009variational}). Thus, by \cite[Thm. 6.30]{hybrid_book}, the result follows.
\end{proof}

We are ready to prove Thms. \ref{thm:solution_convergence} and \ref{thm:stability_solution_convergence}.
\begin{proof}[Proof of Thm. \ref{thm:solution_convergence}]
    We follow steps similar to \cite[Section 4]{hauswirth2020anti}. We prove graphical convergence\footnote{For the definition of graphical convergence, see \cite[Def. 5.18]{hybrid_book}.} of $\{\phi_{i_k}\}_{k\in\mathbb{N}}$ or $\{\phi_i\}_{i\in\mathbb{N}}$ to $\phi$. Thus, we regularly treat $\phi_{i_k}$, $\phi_i$ and $\phi$ as sets, through their graphs, below. From \cite[Lemma 5.28]{hybrid_book}, graphical convergence implies uniform convergence on compact subintervals of $[0,T)$. Further, since $\sigma(\cdot)$ is decreasing and $\lim_{\alpha\to\infty}\sigma(\alpha)=0$, the sequence $\sigma(\alpha_i)$ is decreasing and $\lim_{i\to\infty}\sigma(\alpha_i)=0$.

    \textbf{Proof of Statement 1.} From \cite[Thm. 5.7]{hybrid_book}, $\{\phi_i\}_{i\in\mathbb{N}}$ has a graphically convergent subsequence, say $\{\phi_{i_k}\}_{k\in\mathbb{N}}$. Note that $\phi_i$ are also solutions to perturbed DI \eqref{eq:perturbed_DI} for $\alpha=\alpha_i$ (Cor. \ref{cor:interconnected_cbf_solutions_to_perturbed_DI}), i.e., $\phi_i$, and thus $\phi_{i_k}$, are solutions to $\sigma(\alpha_i)$-perturbations, and $\sigma(\alpha_{i_k})$-perturbations, respectively, of the limiting DI \eqref{eq:limit_DI} (Cor. \ref{cor:cbf_perturbation_of_limit_di}). Thus, since the  limiting DI \eqref{eq:limit_DI} is well-posed (Prop. \ref{prop:well-posed-limit-DI}), and since $\{\phi_{i_k}\}_{k\in\mathbb{N}}$ is convergent, then $\{\phi_{i_k}\}_{k\in\mathbb{N}}$ graphically converges to a solution $\phi$ of the limiting DI \eqref{eq:limit_DI}, from \cite[Def. 6.29]{hybrid_book}. Finally, from Cor. \ref{cor:pds_same_solutions_with_limit_di} item 1, $\phi$ is a solution to PDS \eqref{eq:interconnection_pds}.

    \textbf{Proof of Statement 2.} Similar to the proof of \cite[Cor. 4.6]{hauswirth2020anti}.
\end{proof}

\begin{proof}[Proof of Thm. \ref{thm:stability_solution_convergence}]
    Let $\gamma'>0$ with $\gamma'<\gamma$. Limiting DI \eqref{eq:limit_DI} is well-posed (Prop. \ref{prop:well-posed-limit-DI}) and has the same solutions as PDS \eqref{eq:interconnection_pds} (Cor. \ref{cor:pds_same_solutions_with_limit_di}). Perturbed DI \eqref{eq:perturbed_DI} is a $\sigma(\alpha)$-perturbation to limiting DI \eqref{eq:limit_DI} (Cor. \ref{cor:cbf_perturbation_of_limit_di}). Then, from \cite[Prop. 6.34]{hybrid_book}, $\exists \alpha'\geq\alpha_*$, such that for all $\alpha\geq\alpha'$ every solution $\chi$ to perturbed DI \eqref{eq:perturbed_DI} is $\gamma'$-close to a solution of \eqref{eq:limit_DI}, implying $\chi(t)\in(\Zset'+\gamma'\ball)\times\Sset$, $\forall t\in[0,T]$. 

    Now, consider a solution $\psi:[0,T]\to\real^m\times\real^n$ to the CBF-based dynamics \eqref{eq:interconnection_cbf} for $\alpha\geq\alpha'$, with $\psi(0)=(z_0,x_0)\in\Zset_0\times\Sset$. Note that, denoting $\psi(t) = (\zeta(t),\xi(t))$, from standard CBF theory, as aforementioned, it is guaranteed that $\xi(t)\in\Sset$, for all $t\in[0,T].$ We prove that $\psi(t)\in\Zset\times\Sset$ for all $t\in[0,T]$, thus proving the statement. Reasoning by contradiction, assume that there exists $t_e\in [0,T)$ such that $t_e=\inf \{t:\text{ }\psi(t)\notin\Zset\times\Sset\}$. By absolute continuity of $\psi$, and since $\Zset_0\subseteq\Zset'+\gamma'\ball \subset \Zset'+\gamma\ball\subseteq\Zset$, there also exists time $t'<t_e$ such that $\psi(t') \notin (\Zset'+\gamma'\ball)\times\Sset$ and $\psi(t')\in \Zset\times\Sset$. Thus, from Cor. \ref{cor:interconnected_cbf_solutions_to_perturbed_DI}, the time-truncated solution $\psi|_{[0,t']}$ is also a solution to the perturbed DI \eqref{eq:perturbed_DI}. However, from the above, this implies that $\psi(t') \in (\Zset'+\gamma'\ball)\times\Sset$, which is a contradiction.
\end{proof}

\bibliography{mybib.bib}

\begin{thebibliography}{10}
\providecommand{\url}[1]{#1}
\csname url@samestyle\endcsname
\providecommand{\newblock}{\relax}
\providecommand{\bibinfo}[2]{#2}
\providecommand{\BIBentrySTDinterwordspacing}{\spaceskip=0pt\relax}
\providecommand{\BIBentryALTinterwordstretchfactor}{4}
\providecommand{\BIBentryALTinterwordspacing}{\spaceskip=\fontdimen2\font plus
\BIBentryALTinterwordstretchfactor\fontdimen3\font minus \fontdimen4\font\relax}
\providecommand{\BIBforeignlanguage}[2]{{%
\expandafter\ifx\csname l@#1\endcsname\relax
\typeout{** WARNING: IEEEtran.bst: No hyphenation pattern has been}%
\typeout{** loaded for the language `#1'. Using the pattern for}%
\typeout{** the default language instead.}%
\else
\language=\csname l@#1\endcsname
\fi
#2}}
\providecommand{\BIBdecl}{\relax}
\BIBdecl

\bibitem{nagurney1995projected}
A.~Nagurney and D.~Zhang, \emph{Projected dynamical systems and variational inequalities with applications}.\hskip 1em plus 0.5em minus 0.4em\relax Springer Science \& Business Media, 1995, vol.~2.

\bibitem{brogliato2006equivalence}
B.~Brogliato, A.~Daniilidis, C.~Lemarechal, and V.~Acary, ``On the equivalence between complementarity systems, projected systems and differential inclusions,'' \emph{Systems \& Control Letters}, vol.~55, no.~1, pp. 45--51, 2006.

\bibitem{heemels2000complementarity}
W.~P. M.~H. Heemels, J.~M. Schumacher, and S.~Weiland, ``Projected dynamical systems in a complementarity formalism,'' \emph{Operations Research Letters}, vol.~27, no.~2, pp. 83--91, 2000.

\bibitem{hauswirth2021projected}
A.~Hauswirth, S.~Bolognani, and F.~D{\"o}rfler, ``Projected dynamical systems on irregular, non-{E}uclidean domains for nonlinear optimization,'' \emph{SIAM Journal on Control and Optimization}, vol.~59, no.~1, pp. 635--668, 2021.

\bibitem{deenen2021projection}
D.~A. Deenen, B.~Sharif, S.~van~den Eijnden, H.~Nijmeijer, W.~P. M.~H. Heemels, and M.~Heertjes, ``Projection-based integrators for improved motion control: Formalization, well-posedness and stability of hybrid integrator-gain systems,'' \emph{Automatica}, vol. 133, p. 109830, 2021.

\bibitem{heemels2023existence}
W.~P. M.~H. Heemels and A.~Tanwani, ``Existence and completeness of solutions to extended projected dynamical systems and sector-bounded projection-based controllers,'' \emph{IEEE Control Systems Letters}, vol.~7, pp. 1590--1595, 2023.

\bibitem{chu2023projection}
H.~Chu, S.~van~den Eijnden, and W.~P. M.~H. Heemels, ``Projection-based controllers with inherent dissipativity properties,'' in \emph{2023 62nd IEEE Conference on Decision and Control (CDC)}.\hskip 1em plus 0.5em minus 0.4em\relax IEEE, 2023, pp. 5664--5669.

\bibitem{lorenzetti2022pi}
P.~Lorenzetti and G.~Weiss, ``{PI} control of stable nonlinear plants using projected dynamical systems,'' \emph{Automatica}, vol. 146, p. 110606, 2022.

\bibitem{fu2023novel}
Z.~Fu, C.~Cenedese, M.~Cucuzzella, Y.~Kawano, W.~Yu, and J.~M. Scherpen, ``Novel control approaches based on projection dynamics,'' \emph{IEEE Control Systems Letters}, vol.~7, pp. 2179--2184, 2023.

\bibitem{hauswirth2020anti}
A.~Hauswirth, F.~D{\"o}rfler, and A.~Teel, ``Anti-windup approximations of oblique projected dynamics for feedback-based optimization,'' \emph{arXiv preprint arXiv:2003.00478}, 2020.

\bibitem{hauswirth2021optimization}
A.~Hauswirth, Z.~He, S.~Bolognani, G.~Hug, and F.~D{\"o}rfler, ``Optimization algorithms as robust feedback controllers,'' \emph{Annual Reviews in Control}, vol.~57, p. 100941, 2024.

\bibitem{GB-JC-JIP-EDA:22-tcns}
G.~Bianchin, J.~Cort\'{e}s, J.~I. Poveda, and E.~Dall'Anese, ``Time-varying optimization of {LTI} systems via projected primal-dual gradient flows,'' \emph{IEEE Transactions on Control of Network Systems}, vol.~9, no.~1, pp. 474--486, 2022.

\bibitem{ames2019control_review}
A.~D. Ames, S.~Coogan, M.~Egerstedt, G.~Notomista, K.~Sreenath, and P.~Tabuada, ``Control barrier functions: theory and applications,'' in \emph{2019 18th European control conference (ECC)}.\hskip 1em plus 0.5em minus 0.4em\relax IEEE, 2019, pp. 3420--3431.

\bibitem{ames2016control}
A.~D. Ames, X.~Xu, J.~W. Grizzle, and P.~Tabuada, ``Control barrier function based quadratic programs for safety critical systems,'' \emph{IEEE Transactions on Automatic Control}, vol.~62, no.~8, pp. 3861--3876, 2016.

\bibitem{panagou2015distributed}
D.~Panagou, D.~M. Stipanovi{\'c}, and P.~G. Voulgaris, ``Distributed coordination control for multi-robot networks using {L}yapunov-like barrier functions,'' \emph{IEEE Transactions on Automatic Control}, vol.~61, no.~3, pp. 617--632, 2015.

\bibitem{reis2020control}
M.~F. Reis, A.~P. Aguiar, and P.~Tabuada, ``Control barrier function-based quadratic programs introduce undesirable asymptotically stable equilibria,'' \emph{IEEE Control Systems Letters}, vol.~5, no.~2, pp. 731--736, 2020.

\bibitem{wences2022compatibility}
W.~S. Cortez and D.~V. Dimarogonas, ``On compatibility and region of attraction for safe, stabilizing control laws,'' \emph{IEEE Transactions on Automatic Control}, vol.~67, no.~9, pp. 4924--4931, 2022.

\bibitem{edmond2006bv}
J.~F. Edmond and L.~Thibault, ``{BV} solutions of nonconvex sweeping process differential inclusion with perturbation,'' \emph{Journal of Differential Equations}, vol. 226, no.~1, pp. 135--179, 2006.

\bibitem{delimpaltadakis2023relationship}
G.~Delimpaltadakis and W.~P. M.~H. Heemels, ``On the relationship between control barrier functions and projected dynamical systems,'' in \emph{2023 62nd IEEE Conference on Decision and Control (CDC)}.\hskip 1em plus 0.5em minus 0.4em\relax IEEE, 2023, pp. 770--775.

\bibitem{allibhoy2023control}
A.~Allibhoy and J.~Cort{\'e}s, ``Control barrier function-based design of gradient flows for constrained nonlinear programming,'' \emph{IEEE Transactions on Automatic Control}, vol.~69, no.~6, 2024.

\bibitem{allibhoy2023anytime}
------, ``Anytime solvers for variational inequalities: the (recursive) safe monotone flows,'' \emph{arXiv preprint arXiv:2311.09527}, 2023.

\bibitem{chen2023online}
Y.~Chen, L.~Cothren, J.~Cortés, and E.~Dall’Anese, ``Online regulation of dynamical systems to solutions of constrained optimization problems,'' \emph{IEEE Control Systems Letters}, vol.~7, pp. 3789--3794, 2023.

\bibitem{rockafellar2009variational}
R.~T. Rockafellar and R.~J.-B. Wets, \emph{Variational analysis}.\hskip 1em plus 0.5em minus 0.4em\relax Springer Science \& Business Media, 2009, vol. 317.

\bibitem{adly2016preservation}
S.~Adly, F.~Nacry, and L.~Thibault, ``Preservation of prox-regularity of sets with applications to constrained optimization,'' \emph{SIAM Journal on Optimization}, vol.~26, no.~1, pp. 448--473, 2016.

\bibitem{hybrid_book}
R.~Goebel, R.~Sanfelice, and A.~Teel, \emph{Hybrid Dynamical Systems: modeling, stability, and robustness}.\hskip 1em plus 0.5em minus 0.4em\relax Princeton University Press, 2012.

\bibitem{tiep2012lojasiewicz}
D.~S. Tiep, H.~H. Vui, and N.~T. Thao, ``{\L}ojasiewicz inequality for polynomial functions on non-compact domains,'' \emph{International Journal of Mathematics}, vol.~23, no.~04, p. 1250033, 2012.

\end{thebibliography}
\bibliographystyle{IEEEtran}

\end{document}